\author{Phillip M Bressie}
\thanks{}
\address{Department of Mathematics and Computer Science, Spring Hill College\\
 4000 Dauphin St., Mobile, AL, 36608\\[5pt]
 }
\title {On Tautological Globular Operads}
\keywords{Operads, Higher Categories, Globular Sets, Graded Sets}
\newcommand{\catname}[1]{{\normalfont\textbf{#1}}}
\newcommand{\Glob}{\catname{Glob}}
\newcommand{\Grdset}{\catname{GrdSet}}
\newcommand{\Coll}{\catname{Col}} 
\newcommand{\one}{\textbf{1}}
\newcommand{\Tone}{\mathcal{T}(\one)}
\newcommand{\Tsquared}{\mathcal{T}^2(\one)}
\newcommand{\id}{\mathbbm{1}}
\DeclareSymbolFont{AMSa}{U}{msa}{m}{n}
\DeclareMathSymbol{\mysquare}{\mathord}{AMSa}{"03}
\newcommand{\pullbackmark}[2]{\save ;p+<.8pc,0pc>:(0,-1)::%
	(#1) *{\phantom{Z}} %
	;p+(#2)-(0,0) **@{-}%
	;p-(#1)+(0,0) *{\phantom{Z}} **@{-} \restore}
\newcommand{\pullbackSub}[2]{{}_{#1}\kern-\scriptspace{\times}_{#2}}
\newtheorem{theorem}{Theorem}
\begin{document}

\maketitle
\begin{abstract}
 The purpose of this exposition is to compare the constructions of classical nonsymmetric operads (and their algebras) to that of the globular operads of Leinster and Batanin.  It is hoped that, through this comparison, understanding algebras for globular operads can be made more intuitive and approachable.  We begin by giving a description of the construction of the classical tautological, or endomorphism, operad $taut(X)$ on a set $X$.  We then describe how globular operads are a strict generalization of classical operads.  From this perspective a description is given of the construction for the tautological globular operad $Taut(\mathcal{X})$ on a globular set $\mathcal{X}$ by way of describing the internal hom functor for the monoidal category $\Coll$, of collections and collection homomorphisms, with respect to the monoidal composition tensor product used to define globular operads, all the while emphasizing comparisons to the analogous construction in the category of graded sets.
\end{abstract}


\section{Introduction}\label{sec-Introduction}
The theory of operads has provided many insights and tools for modern mathematicians.  In particular, they provide a framework which can encode certain classes of algebraic theories.  Classical operads are limited, however, to algebraic theories in which the `operations' of the theory are of a single type.  If we wish to encode an algebraic theory which has operations which live in a higher-dimensional categorical structure, then a generalization of the classical notion of an operad is needed, namely the theory of globular operads.  These generalized operads have been studied extensively by both Leinster \cite{leinster2004higher} and Batanin \cite{batanin_1998GlobCat}.  The basic idea of a `globular' operad is to replace the set of operations in a classical operad with a globular set of operations.  Moreover, the numerical arity of an operation is generalized to the notion of an arity determined by a pasting scheme, all of whose cells are globes.  In this way, the process of composing operations in a classical operad by plugging $k$ operations into a single operation with $k$ inputs is generalized to the process of replacing $n$-cells in a globular pasting scheme with other $n$-dimensional pasting schemes to make a generalized composition on pasting operations of any finite dimension.  As Leinster and Batanin have shown, this generalization is particularly useful for keeping track of the infinitely many ways $n$-morphisms can be composed in a $\omega$-category.

The purpose of the present paper is not to explore more deeply or give further examples of such structures, but rather to provide insight into how one can think about algebras for globular operads.  Note that the results found in this exposition can be understood in greater generality using the theory of multicategories and $T$-spans (for a Cartesian monad $T$) as is done by Leinster in \cite{leinster2004higher}.  We intentionally avoid using the language of this theory here for the sake of making an introduction to globular operads that is more intuitive and accessible (and hopefully inspiring further exploration into the more abstract and technical details of the more general theory).  We shall do so by building intuition for globular operads and their algebras by comparison to the analogous construction for classical operads, thought of as monoids of graded sets.  Although further generalizations proceed more naturally at the more general level of multicategories and $T$-operads, it is the author's hope that describing the present examples in parallel may provide a helpful and accessible introduction to the constructions of the much larger and beautiful theory.

We will first review how to define algebras in the classical sense through the construction of the tautological operad on a set.  We shall then discuss how this construction can be alternatively described by thinking of operads as monoids in the category of graded sets.  Then, using some topos theoretic constructions, we give an explicit description of an internal hom in $\Grdset$ with respect to the tensor product used to define operads as monoids.  This allows us to define the tautological operad on a graded set so that we can explicitly define algebras for operads in this more general context.  All of this is done so that we can then explain, via comparison, how such a construction works when moving into the globular setting.  Note that both constructions follow the same procedure, in different contexts, on similar types of objects.  In particular, graded sets are replaced by collections and our operad composition is replaced with pasting compositions in all dimensions.  The final sections of this paper sketch this process using globular sets and collections. The construction of the tautological operad in the graded set setting is done in far greater detail so that the omitted details of the analogous construction for globular sets can be understood by analogy.

Before we begin, note that throughout this exposition we adopt the convention of using `classical operad' when we really mean traditional nonsymmetric operads.  We will not in this paper consider operads equipped with a symmetric group action.  However, we will often refer to these nonsymmetric operads simply as operads, with no other further adjectives attached.  The specific use of `classical' is intended only to distinguish between the globular and non-globular cases.

	\section{Basic definitions}\label{sec-basic-def}
	We begin by recalling the following standard definitions.  Further details can found in \cite{may2006geometry} or \cite{leinster2004higher}.

		\begin{definition}
			A \emph{nonsymmetric operad} $O$ consists of a sequence of sets $\{O(n)\}_{n \in \mathbb{N}}$ whose $n$-th entry is the set of $n$-ary operations, an identity operation $\id \in O(1)$, and for all $n,k_1,k_2,...,k_n \in \mathbb{N}$ a composition operation
			$$\circ: O(n) \times \displaystyle\prod_{i = 1}^{n}O(k_i) \rightarrow O(\displaystyle\sum_{i=1}^{n}k_i)$$
			such that
			$$\theta_0 \circ (\theta_1 \circ (\theta_{1_1}, ..., \theta_{1_k}), ..., \theta_n \circ (\theta_{n_1}, ..., \theta_{n_l})) = (\theta_0 \circ (\theta_1, ..., \theta_n)) \circ (\theta_{1_1},..., \theta_{1_k}, ...,\theta_{n_1}, ..., \theta_{n_l})$$
			and
			$$\theta_0 \circ (\id, \id, ..., \id) = \theta_0 = \id \circ \theta_0$$
			for all $\theta_i \in \{O(n)\}_{n \in \mathbb{N}}$ whenever the compositions are well-defined.
		\end{definition}
	
		\begin{definition}
			A \emph{homomorphism of operads} $f:O \rightarrow P$ is a sequence of maps \newline$\{f_n:O(n) \rightarrow P(n) \}_{n \in \mathbb{N}}$ that preserve both the identity operation and composition maps.
		\end{definition}
		
		\begin{definition}
			The \emph{tautological operad} $taut(X)$ on a set $X$ is the operad whose $n$-ary operations are $\Set(X^n,X)$, identity operation is the identity map $\id_X:X \rightarrow X$, and composition is given by
			$$\circ: \Set(X^n,X) \times \displaystyle\prod_{i = 1}^{n}\Set(X^{k_i},X) \rightarrow \Set(X^n,X) \times \Set(X^{\sum_{i}k_i},X^n) \rightarrow \Set(X^{\sum_{i}k_i},X)$$
			where first we take the disjoint union of maps in the second factor, keeping the first factor fixed, and then compose the resulting two factors as set maps.
		\end{definition}
			
		\begin{definition}
			An \emph{algebra} $A$ for an operad $O$ is a set $A$ equipped with an operad homomorphism $\xi:O \rightarrow taut(A)$.
		\end{definition}
				
		We now give a description of classical nonsymmetric operads in a different way which more naturally generalizes to the globular setting.  We begin with the following definition.
				
		\begin{definition}
			A \emph{graded set} is a set $X$ equipped with a function $x:X \rightarrow \mathbb{N}$ called the arity map.
		\end{definition}
					
		A graded set may also be thought of as a countably indexed family of sets in which for each $n \in \mathbb{N}$ the fiber $X_n := x^{-1}(n)$ is the set of `$n$-ary' elements.  In what follows, by abuse of notation we will often represent a graded set $x: X \rightarrow \mathbb{N}$ by its underlying set $X$.
					
		\begin{definition}
			Let
			$x: X \rightarrow \mathbb{N}$
			and
			$y: Y \rightarrow \mathbb{N}$
			be graded sets.  A \emph{morphism of graded sets} between them is a function $f:X \rightarrow Y$ which makes the following triangle commute:
			$$\xymatrix{X \ar[rr]^{f} \ar[dr]_{x} & & Y \ar[dl]^{y} \\
			& \mathbb{N} & & }$$
		\end{definition}
							
		Although the category of graded sets is the slice category $\Set / \mathbb{N}$, we shall denote it by $\Grdset$.  In what follows we will freely interchange the set $\mathbb{N}$ with $T(\{*\})$ where $(T:\Set \rightarrow \Set,\mu: T^2 \Rightarrow T, \eta: \id \Rightarrow T)$ is the free monoid monad on $\Set$.  This shift in perspective of thinking about $\mathbb{N}$ as the free monoid on the one point set $\{*\}$ will help us later to more naturally generalize this construction to the globular setting.  Moreover, we will shortly make use of the fact that the monad $T$ is cartesian $\cite{Leinster1998GenOp}$.
		
		\begin{definition}
			A monad $(T:\Set \rightarrow \Set,\mu: T^2 \Rightarrow T, \eta: \id \Rightarrow T)$ is a $\emph{cartesian monad}$ if all naturality squares for $\mu$ and $\eta$ are pullback squares and $T$ preserves all pullbacks.
		\end{definition}		
							
		The category $\Grdset$ has a second monoidal category structure different from both the cartesian and cocartesian structures.  We shall here denote this second monoidal product by $\mysquare$.
							
		\begin{definition}
			Let
			$x: X \rightarrow \mathbb{N}$
			and
			$y: Y \rightarrow \mathbb{N}$
			be a pair of graded sets.  Their \emph{composition tensor product}
			$x \mysquare y: X \mysquare Y \rightarrow \mathbb{N}$
			is defined by the diagram
			$$\xymatrix{X \mysquare Y \pullbackmark{0,2}{2,0} \ar[rr] \ar[dd] & & T(Y) \ar[r]^-{T(y)} \ar[dd]^{T(!_{Y})} & T^2(\{*\}) \ar[r]^-{\mu_{\{*\}}} & T(\{*\}) \\
			& &  \\
			X \ar[rr]^{x} & & T(\{*\})}$$
			where $!_Y:Y \rightarrow \{*\}$ is the unique map from $Y$ to the terminal one point set.  The underlying graded set $X \mysquare Y$ is the pullback of $x$ and $T(!_Y)$ with the arity function $x \mysquare y$ defined to be the composition along the top row.
		\end{definition}
									
		This definition makes $X \mysquare Y$ the graded set whose elements are pairs $(a,\psi)$ consisting of an element $a \in X$ and a word $\psi$ of elements from $Y$ with the property that the arity of $a$ agrees with length of $\psi$ given by $T(!_Y)$.  We then think of the elements of $X \mysquare Y$ as composable pairs consisting of elements from $Y$ that can be `plugged into' a single element from $X$.  Moreover, the arity of each pair is given as the sum of the arities of the entries in $\psi$ (as elements of $Y$).
		
		\begin{theorem}
			The product $\mysquare$ together with the terminal graded set $i: \{*\} \hookrightarrow \mathbb{N}$ gives $\Grdset$ the structure of a monoidal category.
		\end{theorem}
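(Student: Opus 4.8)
The plan is to exhibit the three data of a monoidal structure --- the tensor as a bifunctor, an associator, and left and right unitors --- and then to verify the pentagon and triangle coherence axioms, at every stage exploiting that $T$ is cartesian. Conceptually, $(\Grdset,\mysquare)$ is the endomorphism monoidal category of the terminal set $\{*\}$ inside Leinster's bicategory of $T$-spans \cite{leinster2004higher}, so the statement is an instance of a general fact; I would nonetheless carry out the verification by hand, since it is exactly cartesianness that makes each comparison map an isomorphism.

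First I would promote $\mysquare$ to a functor $\Grdset \times \Grdset \to \Grdset$. Given morphisms of graded sets $f : X \to X'$ and $g : Y \to Y'$, the cone $(f \circ \pi_X,\, T(g) \circ \pi_{TY})$ into the defining diagram of $X' \mysquare Y'$ commutes over $T(\{*\})$: indeed $x' \circ f = x$ because $f$ lies over $\mathbb{N}$, and $T(!_{Y'}) \circ T(g) = T(!_Y)$ because $!_{Y'} \circ g = !_Y$ by terminality of $\{*\}$. The universal property of the pullback then yields a unique map $f \mysquare g : X \mysquare Y \to X' \mysquare Y'$, and $T(y') \circ T(g) = T(y)$ shows that it respects arities. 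Preservation of identities and of composites is then immediate from the uniqueness clause of that universal property, giving functoriality.

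Next the unitors, with unit $i = \eta_{\{*\}} : \{*\} \hookrightarrow \mathbb{N}$. For the right unit, $!_{\{*\}} = \id_{\{*\}}$ gives $T(!_{\{*\}}) = \id_{T(\{*\})}$, so the apex of $X \mysquare \{*\}$ is canonically $X$, and its arity is $\mu_{\{*\}} \circ T(i) \circ x = \mu_{\{*\}} \circ T\eta_{\{*\}} \circ x$, which equals $x$ by the unit law $\mu \circ T\eta = \id$. For the left unit, cartesianness enters head-on: the $\eta$-naturality square at $!_X : X \to \{*\}$ is a pullback, and it is precisely the defining pullback of $\{*\} \mysquare X$ along $i = \eta_{\{*\}}$ and $T(!_X)$, with apex $X$ via $\eta_X$; the arity then reduces to $x$ using naturality of $\eta$ together with the law $\mu \circ \eta_T = \id$. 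Both isomorphisms are natural by uniqueness of induced maps into pullbacks.

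The associator is the main obstacle. On elements the comparison $(X \mysquare Y) \mysquare Z \to X \mysquare (Y \mysquare Z)$ is the rebracketing that cuts a word of $Z$ of length $\sum_i y(w_i)$ into consecutive blocks of lengths $y(w_1), \dots, y(w_{x(a)})$ --- plainly a bijection --- but the real task is to produce it canonically from universal properties, so that naturality and coherence become automatic. This is exactly where both cartesian hypotheses are used: since $T$ preserves pullbacks, $T(Y \mysquare Z) \cong T(Y) \times_{T^2(\{*\})} T^2(Z)$, and, combining this with the fact that the $\mu$-naturality squares are pullbacks and with the associativity law $\mu \circ T\mu = \mu \circ \mu_T$, repeated use of the pasting lemma for pullbacks identifies both iterated pullbacks with the single limit of the diagram assembled from $x$, $T(y)$, $T^2(z)$ and the multiplication maps. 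The comparison is therefore an isomorphism, and it is natural because every arrow in play is induced by a universal property. Finally, each side of the pentagon (respectively of the triangle) is a map into a pullback, so it suffices to check equality after postcomposing with the defining projections, which follows from the monad associativity and unit laws; alternatively one appeals to the coherence theorem for the $T$-span bicategory, whose very construction packages these same identities.
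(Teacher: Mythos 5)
Your proposal is correct and follows essentially the same route as the paper: the associator is obtained by factoring the iterated pullback defining $(X \mysquare Y) \mysquare Z$ via the pasting lemma, using that $\mu$-naturality squares are pullbacks and that $T$ preserves pullbacks, with arity compatibility coming from the monad associativity law, and coherence deferred to the universal properties. Your treatment of the unitors via the monad unit laws and the $\eta$-naturality pullback (and your explicit check of bifunctoriality) is slightly more systematic than the paper's element-wise description, but it is the same argument in substance.
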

	
		\begin{proof}	
			The associator for $\Grdset$ with respect to $\mysquare$ can be obtained as follows.  Consider the graded sets $x:X \rightarrow T(\{*\})$, $y:Y \rightarrow T(\{*\})$, and $z:Z \rightarrow T(\{*\})$.  Then construct the graded sets
			$$x \mysquare y:X \mysquare Y \rightarrow T(\{*\})$$
			$$y \mysquare z:Y \mysquare Z \rightarrow T(\{*\})$$
			$$(x \mysquare y) \mysquare z:(X \mysquare Y) \mysquare Z \rightarrow T(\{*\})$$
			$$x \mysquare (y \mysquare z):X \mysquare (Y \mysquare Z) \rightarrow T(\{*\})$$
			as described above.  By definition, this means that the diagrams defining $(x \mysquare y) \mysquare z$ and $x \mysquare y$ can be configured together in the following way:
			$$\adjustbox{max width=\columnwidth}{\xymatrix{(X \mysquare Y) \mysquare Z \pullbackmark{0,2}{2,0} \ar[rrrrrr]^{\pi_2} \ar[dd]_{\pi_1} & &  & &  & & T(Z) \ar[rr]^{T(z)} \ar[dd]^{T(!_Z)} & & T^2(\{*\}) \ar[rr]^{\mu_{\{*\}}} & & T(\{*\}) \\
					\\
					X \mysquare Y \pullbackmark{0,2}{2,0} \ar[rr]^{\pi_2} \ar[dd]^{\pi_1} & & T(Y) \ar[rr]^{T(y)} \ar[dd]^{T(!_Y)} & & T^2(\{*\}) \ar[rr]^{\mu_{\{*\}}} & & T(\{*\}) \\
					\\
					X \ar[rr]^{x} & & T(\{*\})
				}}$$
			Note then that the top pullback square can be factored into three iterated pullback squares to obtain the following diagram:
			$$\adjustbox{max width=\columnwidth}{\xymatrix{(X \mysquare Y) \mysquare Z \pullbackmark{0,2}{2,0} \ar[rr]^{\phi} \ar[dd]_{\pi_1} & & T(Y \mysquare Z) \pullbackmark{0,2.5}{2.5,0} \ar[rr]^{T(\pi_2)} \ar[dd]^{T(\pi_1)} & & T^2(Z) \pullbackmark{0,2}{2,0} \ar[rr]^{\mu_{Z}} \ar[dd]^{T^2(!_Z)} & & T(Z) \ar[rr]^{T(z)} \ar[dd]^{T(!_Z)} & & T^2(\{*\}) \ar[rr]^{\mu_{\{*\}}} & & T(\{*\}) \\
					\\
					X \mysquare Y \pullbackmark{0,2}{2,0} \ar[rr]^{\pi_2} \ar[dd]^{\pi_1} & & T(Y) \ar[rr]^{T(y)} \ar[dd]^{T(!_Y)} & & T^2(\{*\}) \ar[rr]^{\mu_{\{*\}}} & & T(\{*\}) \\
					\\
					X \ar[rr]^{x} & & T(\{*\})
			}}$$
			Here the top right square is a pullback because it is a naturality square for $\mu$.  The top middle square is a pullback because it is the image of a pullback square under $T$, which preserves all pullbacks.  The top left square is then the pullback square which must exist by the fact that the single pullback we started with can be factored in this way by the right and middle pullbacks just described.  Now observe that the top left and bottom pullback squares together must form a pullback.  But then the pair of maps
			$$x \circ \pi_1: X \mysquare (Y \mysquare Z) \rightarrow T(\{*\})$$
			$$T(!_Y) \circ T(\pi_1) \circ \pi_2: X \mysquare (Y \mysquare Z) \rightarrow T(\{*\})$$
			give another pullback of the same cospan, inducing a map $\alpha_{X,Y,Z}: X \mysquare (Y \mysquare Z) \rightarrow (X \mysquare Y) \mysquare Z$ which we claim is the desired associator.  It remains to see that this map preserves the arity map for these two graded sets.  To see this, consider the following diagram:
			$$\xymatrix{T(Z) \ar[rr]^{T(z)}  & & T^2(\{*\}) \ar[rrdd]^{\mu_{\{*\}}} & & \\
				\\
				T^2(Z) \ar[uu]^{\mu_Z} \ar[rr]^{T^2(z)} & & T^3(\{*\}) \ar[uu]_{\mu_{T(\{*\})}} \ar[dd]^{T(\mu_{\{*\}})} & & T(\{*\}) \\
				\\
				T(Y \mysquare Z) \ar[uu]^{T(\pi_2)} \ar[rr]^{T(y \mysquare z)} & & T^2(\{*\}) \ar[rruu]_{\mu_{\{*\}}} & & \\
				}$$
			The bottom square is the definition of $T(y \mysquare z)$.  The top square commutes by the naturality of $\mu$.  The right square commutes by the associativity condition on $\mu$ as the multiplication transformation for $T$ as a monad.  The commutativity of the outer edges of this diagram then gives that
			$$\mu_{\{*\}} \circ T(y \mysquare z) = \mu_{\{*\}} \circ T(z) \circ \mu_Z \circ T(\pi_2)$$
			which together with the fact that
			$$\phi \circ \alpha_{X,Y,Z} = \pi_2: X \mysquare (Y \mysquare Z) \rightarrow T(Y \mysquare Z)$$
			shows that
			$$x \mysquare (y \mysquare z) = \mu_{\{*\}} \circ T(y \mysquare z) \circ \pi_2$$
			$$ = \mu_{\{*\}} \circ T(z) \circ \mu_Z \circ T(\pi_2) \circ \phi \circ \alpha_{X,Y,Z} = (x \mysquare y) \mysquare z \circ \alpha_{X,Y,Z}$$
			ensuring that the associator preserves arities, thus giving an isomorphism of graded sets.
			
			The monoidal identity for $\mysquare$ is the graded set $i: \{*\} \hookrightarrow \mathbb{N}$  where $i$ is simply the inclusion of the generator $*$ into the set $T(\{*\})$.  To see that this is the correct monoidal identity for $\mysquare$, notice that for any graded set $X$, $X \mysquare \{*\} = \{ (a,n)|a \in X, n \in T(\{*\}), x(a) = n \}$.  In other words, $X \mysquare \{*\}$ consists of pairs, an element from $X$ together with its arity.  This means that for each graded set $X \in \Grdset$ the $X$ component of the right unitor $\rho_X : X \mysquare \{*\} \rightarrow X$ is simply first projection with its inverse given by the graded set inclusion map $\overline{\rho_X}: X \hookrightarrow X \mysquare \{*\}$ which couples each element in $X$ with its arity.  By swapping the two variables we get that $\{*\} \mysquare X = \{ (*,\psi)|\psi \in T(X), T(!_X)(\psi) = * \}$.  Hence $\{*\} \mysquare X$ consists of pairs, the singleton $\{*\}$ and a word of length one from $T(X)$.  But words of length one in $T(X)$ are exactly the elements of $X$.  This then implies that the $X$ component of the left unitor $\lambda_X : \{*\} \mysquare X \rightarrow X$ must be second projection with inverse given by the graded set inclusion map $\overline{\lambda_X}: X \hookrightarrow \{*\} \mysquare X$ which couples each element in $X$ with its arity, but on the opposite side as that of $\overline{\rho_X}$.
			
			It then remains only to show that the pentagon coherence condition follows.  But this is clear from the fact that each component of the associator follows from a universal construction.  The triangle identities follow immediately from the fact that each component of the left and right unitors is simply a projection map.
		\end{proof}

		\begin{theorem}
			A nonsymmetric operad is a monoid in $\Grdset$ with respect to the monoidal product $\mysquare$.
		\end{theorem}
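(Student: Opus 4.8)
The plan is to exhibit a correspondence between the data and axioms of a monoid $(M,\mu,e)$ in the monoidal category $(\Grdset,\mysquare,i)$ and those of a nonsymmetric operad, and to check that the two passages are mutually inverse. Recall that such a monoid consists of a graded set $m:M\rightarrow\mathbb{N}$ equipped with a multiplication morphism of graded sets $\mu:M\mysquare M\rightarrow M$ and a unit morphism $e:\{*\}\rightarrow M$, subject to the associativity square and the two unit triangles. First I would pass from the graded set $M$ to the sequence of sets $\{O(n)\}_{n\in\mathbb{N}}$ by taking fibers, $O(n):=m^{-1}(n)$; as remarked after the definition of graded sets, this loses no information, so the underlying data of $M$ and of $\{O(n)\}_{n}$ determine one another.

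Next I would decode the multiplication. By the explicit description of $X\mysquare Y$ recorded above, an element of $M\mysquare M$ is a composable pair $(a,\psi)$ in which $a\in M$ has some arity $n$ and $\psi=(b_1,\dots,b_n)$ is a word of length $n$ in $M$, whose total arity is $\sum_{i=1}^{n}m(b_i)$. Because $\mu$ is a morphism of graded sets it preserves arity, so if $a\in O(n)$ and $b_i\in O(k_i)$ then $\mu(a,(b_1,\dots,b_n))\in O(\sum_i k_i)$. Restricting $\mu$ to these fibers therefore produces, for all $n,k_1,\dots,k_n$, a map $O(n)\times\prod_{i=1}^{n}O(k_i)\rightarrow O(\sum_i k_i)$, which I identify with the operadic composition $\circ$; conversely, the family of operadic compositions assembles into a single arity-preserving map $M\mysquare M\rightarrow M$. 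Likewise, since $e$ preserves arity and the generator $*$ has arity $1$, the unit $e:\{*\}\rightarrow M$ is exactly the choice of a distinguished element $e(*)\in O(1)$, which I identify with the operadic identity $\id$.

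It then remains to match the coherence conditions. For the unit, I would evaluate the two monoid unit triangles using the unitors $\lambda$ and $\rho$ constructed in the previous theorem, which are the second and first projections respectively. Tracing a single element through $\{*\}\mysquare M\xrightarrow{e\mysquare\id}M\mysquare M\xrightarrow{\mu}M$ gives $\id\circ b=b$, while tracing through $M\mysquare\{*\}\xrightarrow{\id\mysquare e}M\mysquare M\xrightarrow{\mu}M$ gives $\theta_0\circ(\id,\dots,\id)=\theta_0$; these are precisely the operadic unit laws. For associativity I would take a general element $(\theta_0,((\theta_1,\psi_1),\dots,(\theta_n,\psi_n)))$ of $M\mysquare(M\mysquare M)$ and run it around both legs of the monoid associativity square, $\mu\circ(\mu\mysquare\id)\circ\alpha_{M,M,M}=\mu\circ(\id\mysquare\mu)$. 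The right leg yields $\theta_0\circ(\theta_1\circ(\dots),\dots,\theta_n\circ(\dots))$, while the left leg, after reassociating by $\alpha$, yields $(\theta_0\circ(\theta_1,\dots,\theta_n))\circ(\theta_{1_1},\dots,\theta_{n_l})$, so that the square commutes exactly when the operad associativity axiom holds.

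The step I expect to be the main obstacle is this last computation with the associator, since $\alpha_{X,Y,Z}$ was defined in the previous theorem by a universal property rather than by a formula. The crucial point to verify is that, on elements, $\alpha_{M,M,M}$ sends $(\theta_0,((\theta_1,\psi_1),\dots,(\theta_n,\psi_n)))$ to $((\theta_0,(\theta_1,\dots,\theta_n)),\psi_1\cdots\psi_n)$, reassociating a word-of-words into a single flattened word in exactly the way that the multiplication $\mu_{\{*\}}$ of the monad $T$ implements the flattening on arities. Once this concrete action of the associator is established, the two bracketings produced by the monoid associativity square coincide with the two sides of the operad associativity relation, and the passages between operads and monoids built above are mutually inverse by construction, completing the identification.
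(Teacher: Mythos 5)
Your proposal is correct and follows essentially the same route as the paper's proof: identify the fibers of the graded set with the sequence $\{O(n)\}$, read off the composition and identity from $m$ and $e$ via the explicit description of $X \mysquare X$ and of the unit object, and match the monoid axioms with the operad axioms. You supply more detail than the paper does — in particular the element-level trace through the unit triangles and the associativity square, and the explicit flattening action of the associator — but these are elaborations of the same argument rather than a different one.
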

										
		\begin{proof}
			A monoid in $\Grdset$ consists of an underlying graded set $x:X \rightarrow \mathbb{N}$, thought of as a set of `operations', together with a composition function $m : X \mysquare X \rightarrow X$ and a unit function $e : \{*\} \rightarrow X$ from $\Grdset$, all of which must satisfy the usual associativity and unital conditions.  We denote $n$-ary operations of the underlying set $X$ by $X_n$, which is simply the fiber over $n$ along the arity map $x$.  These fibers then form the sequence of sets $\{X_n\}_{n \in \mathbb{N}}$ for a nonsymmetric operad.  The function $m$ keeps track of how to compose strings of elements in $X$ with an element $a \in X$ of the appropriate arity. The function $e$ distinguishes an element of $X$ which will behave like an identity operation on the elements of $X$.  The associativity and unit commutative diagrams for a monoid internal to a category then ensure that these graded set maps endow $X$ with the needed associative and unital operadic composition with respect to the $\mysquare$ product.  Conversely, given a nonsymmetric operad $O$, its underlying graded set can be obtained by constructing a map whose fiber over $n$ is exactly the $n$th term in the sequence $\{O(n)\}_{n \in \mathbb{N}}$.  By construction, each well-defined composition in $O$ can be identified with an element of $O \mysquare O$.  Hence the composition map $m: O \mysquare O \rightarrow O$ is defined to be sending each composable pair to their composite in $O$.  As $O$ has a distinguished identity element, the map $e:\{*\} \rightarrow O$ sends * to this distinguished element.  The associativity and unital conditions required of $O$ then guarantee that the needed commutative diagram conditions are satisfied.
		\end{proof}

	\section{Exponentials and the internal hom in \Grdset}\label{sec-exp-grdset}
		In this section we will recall several topos theoretic constructions in order to better understand the category $\Grdset$.  Our primary goal will be to understand the right adjoint to the functor $- \mysquare B:\Grdset \rightarrow \Grdset$ which, for a fixed graded set $B$, sends a graded set $A$ to the product graded set $A \mysquare B$.  Letting the graded set $B$ be variable then allows us to compute the internal hom in $\Grdset$ with respect to the product $\mysquare$.  We will first review how three canonical functors can be formed from a single morphism of graded sets (further details can be found in \cite{johnstone2002sketches}, \cite{johnstone2014topos}, and \cite{maclane1994sheaves}).  We will then, as a warm up, see how these functors can be used to construct a right adjoint to the Cartesian product in $\Grdset$, allowing us to compute exponential objects $A^B$ between graded sets.  We will then show how this procedure can be slightly modified to create the desired right adjoint to $- \mysquare B$.
  
        Let $f: A \rightarrow B$ be a set map.  There is then an induced functor $f^*:\Set/B \rightarrow \Set/A$ between slice categories called a $\textit{change of base}$ functor.  It takes a set map $\chi : X \rightarrow B$ and returns the pullback map $f^{*}(\chi):X \pullbackSub{\chi}{f} A \rightarrow A$ of $\chi$ along $f$.  It is furthermore known that for each $f$ the functor $f^*$ has both a left and right adjoint.  On objects its left adjoint $\Sigma_f: \Set/A \rightarrow \Set/B$ is simply composing an object of $\Set/A$ with $f$ resulting in an object in $\Set/B$.  The right adjoint $\Pi_f: \Set/A \rightarrow \Set/B$ is however a bit more complicated.  Nonetheless, when our base category is $\Set$, it has a fairly straight forward description as follows.  Let $\psi : Y \rightarrow A$ be any morphism in $\Set/A$.  The map $\Pi_f(\psi): \Gamma \rightarrow B$ is constructed by specifying the fiber over each point as follows.  Take an element $b \in B$ and consider its fiber $A_b$ along the map $f$.  Each element $c \in A_b$ then has a fiber $Y_c$ sitting above it along the map $\psi$.  We can then define the fiber $\Gamma_b$ along the map $\Pi_f(\psi)$ to be the product $\displaystyle\prod_{c \in A_b} Y_c$.  Following this construction for each $b \in B$ gives the complete map from $\Gamma := \displaystyle\coprod_{b \in B} \displaystyle\prod_{c \in A_b} Y_c$ to $B$.
										
		Let us now look at how these maps can be used to construct exponential objects in $\Grdset$.  Consider the functor $- \times B: \Grdset \rightarrow \Grdset$ for a fixed graded set $b:B \rightarrow \mathbb{N}$.  It can be written as a composition of the functors defined above.  We get that
		$$- \times B = \Sigma_{b}b^*$$
		since cartesian product in a slice category is given by the pullback of the two factors and $\Grdset$ is itself a slice category.  Writing the functor $- \times B$ in this way allows us to immediately compute its right adjoint $-^{B}:\Grdset \rightarrow \Grdset$, which is the exponentiation by $B$ functor.  This is done by taking the right adjoint of each factor in the composition and reversing the order in which they are composed, which leads to the following formula:
		$$-^B = \Pi_{b}b^*$$
		Note that $b^*(a): A \pullbackSub{a}{b} B \rightarrow B$ is simply second projection.  Hence, this functor takes a graded set $a:A \rightarrow \mathbb{N}$ and applies the fiber-wise construction for $\Pi_{b}(b^{*}(a))$ described above to get, for each $n \in \mathbb{N}$,
		$$\Gamma_n = \displaystyle\prod_{y \in B_n}\{(x,y)|x \in A, a(x)=b(y)\} \cong \displaystyle\prod_{y \in B_n}\{x|x \in A, a(x)=n\}$$
		is the fiber over $n$ along $A^B: \Gamma \rightarrow \mathbb{N}$.  This allows us to think of the elements in each fiber $\Gamma_n$ as a choice of how to associate to each element of arity $n$ from $B$ an element of arity $n$ in $A$.  In other words, it defines a map from $B_n$ to $A_n$.  Recall though that all maps of graded sets carry $n$ fibers to $n$ fibers by definition.  Thus we can think of the exponential object $A^B$ as the set of maps from $B$ to $A$ `cut up' into their $n$th fiber restrictions for each $n \in \mathbb{N}$.
										
		Now consider the functor $- \mysquare B: \Grdset \rightarrow \Grdset$ for the graded set $b:B \rightarrow \mathbb{N}$.  We will construct the internal hom with respect to $\mysquare$ using a similar construction to that of the exponential object above.  We can write $- \mysquare B$ as the following composition:
		$$- \mysquare B = \Sigma_{\mu_{\{*\}}}\Sigma_{T(b)}T(!_B)^*$$
		Notice that this functor takes the graded set $a:A \rightarrow \mathbb{N}$ to the graded set $a \mysquare b: A \mysquare B \rightarrow \mathbb{N}$, where the arity map $a \mysquare b$ is exactly the image of $\Sigma_{\mu_{\{*\}}}\Sigma_{T(b)}T(!_B)^*(a)$.  We see in the diagram below that this is exactly the topmost horizontal composition in the diagram used earlier to define the composition tensor product $\mysquare$ in $\Grdset$.
		$$\xymatrix{A \mysquare B \pullbackmark{0,2}{2,0} \ar[rr]^-{T(!_B)^*} \ar[dd] & & T(B) \ar[r]^-{T(b)} \ar[dd]^{T(!_B)} & T(\mathbb{N}) \ar[r]^-{\mu_{\{*\}}} & \mathbb{N} \\
		& &  \\
		A \ar[rr]^{a} & & \mathbb{N}}$$
		Writing the functor $- \mysquare B$ in this way, just as with $- \times B$ above, allows us to immediately compute its right adjoint $[B,-]:\Grdset \rightarrow \Grdset$ by taking the right adjoint of each factor in the composition and reversing the order in which they are composed.  This then leads to the following formula.
		$$[B,-] = \Pi_{T(!_B)}T(b)^*\mu_{\{*\}}^*$$
		We shall first consider how the composite $T(b)^*\mu_{\{*\}}^*$ acts on a graded set $a:A \rightarrow \mathbb{N}$.  Recall that the map $T(b)^*\mu_{\{*\}}^*(a)$ is given as the topmost edge in the following double pullback diagram.
		$$\xymatrix{(A \pullbackSub{a}{\mu_{\{*\}}^{*}} T(\mathbb{N})) \pullbackSub{\mu_{\{*\}}^{*}(a)}{T(b)} T(B) \pullbackmark{0,2.75}{2.75,0} \ar[rr]^-{T(b)^*\mu_{\{*\}}^*(a)} \ar[dd]_{\pi_1} & & T(B) \ar[dd]^{T(b)} \\
		\\
		A \pullbackSub{a}{\mu_{\{*\}}^{*}} T(\mathbb{N}) \pullbackmark{0,2.75}{2.75,0} \ar[rr]^-{\mu_{\{*\}}^*(a)} \ar[dd]_{\pi_1} & & T(\mathbb{N}) \ar[dd]^{\mu_{\{*\}}} \\
		\\
		A \ar[rr]^a & & \mathbb{N} \\
		}$$
		More concretely, for every $\beta \in T(B)$ there is a fiber over it along $T(b)^*\mu_{\{*\}}^*(a)$ living in the set $A \pullbackSub{a}{\mu_{\{*\}}^{*}} T(\mathbb{N})$ consisting of pairs $(\alpha, t)$ with $\alpha \in A$ and $t$ a word of natural numbers such that the arity of $\alpha$ is the sum of the arities in each slot of the `operation' $t$.  Moreover, the letters of $t$ give the arities of the letters in $\beta$ respectively.  We now apply $\Pi_{T(!_B)}$ to $T(b)^*\mu_{\{*\}}^*(a)$ to get the internal hom in $\Grdset$.  Recall from above that the map $\Pi_{T(!_B)}T(b)^*\mu_{\{*\}}^*(a): \Gamma \rightarrow \mathbb{N}$ is constructed by specifying the fiber over each point.  So take any $n \in \mathbb{N}$ and consider its fiber $T(B)_n$ along the map $T(!_B):T(B) \rightarrow \mathbb{N}$.  Each element $\beta \in T(B)_n$ then has a fiber $((A \pullbackSub{a}{\mu_{\{*\}}^{*}} T(\mathbb{N})) \pullbackSub{\mu_{\{*\}}^{*}(a)}{T(b)} T(B))_{\beta}$ sitting above it along the map $T(b)^*\mu_{\{*\}}^*(a)$.  We can then define the fiber $\Gamma_n$ along the map $\Pi_T(!_B)T(b)^*\mu_{\{*\}}^*(a)$ to be the following product:
		$$\displaystyle\prod_{\beta \in T(B)_n} ((A \pullbackSub{a}{\mu_{\{*\}}^{*}} T(\mathbb{N})) \pullbackSub{\mu_{\{*\}}^{*}(a)}{T(b)} T(B))_{\beta}$$
		Following this construction for each $n \in \mathbb{N}$ gives the complete map.  Thus $\Grdset$ is right closed with respect to the composition tensor product.
												
		Via this construction, we can now compute the internal hom $H_{B,A}: [B,A] \rightarrow \mathbb{N}$ in $\Grdset$ between any two graded sets $b:B \rightarrow \mathbb{N}$ and $a:A \rightarrow \mathbb{N}$.  We can think of the $n$-ary elements of the underlying set $[B,A]$ as follows.  An element $\beta \in T(B)_n$ can be thought of as a choice of $n$ elements $\{\beta_1,\beta_2,...,\beta_n\}$ from $B$ juxtaposed together such that $\beta = \beta_1\beta_2...\beta_n$.  This allows us to think of the fiber
		$$[B,A]_n = \displaystyle\prod_{\beta \in T(B)_n}\{((p,w), \beta)|p \in A, w \in T(\mathbb{N})_n, a(p) = \sum_{i}w_i, T(b)(\beta)=w\}$$
		instead as the set
		$$[B,A]_n \cong \displaystyle\prod_{\beta \in T(B)_n}\{(p,\beta)|p \in A, a(p) = \sum_{i}b(\beta_i)\}$$
		up to isomorphism.  Hence, an element $\gamma \in [B,A]_n$ may be thought of as a choice of elements from $A$ to correspond to each string of $n$ elements from $B$ in such a way as to preserve arities, making $\gamma$ a map of graded sets.  In other words, a `map' in the internal hom is essentially a thing that takes $n$ elements from the source and picks an element of the target that has arity equal to the sum of the arities of the $n$ elements from the source.

	\section{The classical tautological operad}\label{sec-class-taut}
		Consider a graded set $x:X \rightarrow \mathbb{N}$.  We shall now construct the tautological operad on $X$, denoted again by $taut(X)$.  First consider the graded set $taut(X):= [X,X]$ obtained using the internal hom in $\Grdset$.  The underlying graded set for the tautological operad on $X$ can be thought of as abstractly encoding all the possible operations that take some number of elements from $X$ to a single output from $X$.
												
		\begin{theorem}
			The graded set $taut(X)$ admits the structure of a nonsymmetric operad.
		\end{theorem}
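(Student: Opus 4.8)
The plan is to invoke the characterization of operads proved above: a nonsymmetric operad is precisely a monoid in $(\Grdset, \mysquare)$. It therefore suffices to endow $taut(X) = [X,X]$ with the structure of a monoid for the product $\mysquare$, that is, a multiplication $m : [X,X] \mysquare [X,X] \to [X,X]$ and a unit $e : \{*\} \to [X,X]$ obeying the associativity and unit laws. What we are really constructing is the internal endomorphism monoid of the object $X$; such a monoid exists in any right-closed monoidal category, and the previous section showed that $\Grdset$ is right closed with respect to $\mysquare$ via the adjunction $- \mysquare X \dashv [X,-]$, so the general recipe applies here.

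First I would take the counit of this adjunction, an evaluation morphism $\mathrm{ev} : [X,X] \mysquare X \to X$ of graded sets. The multiplication $m$ is then defined to be the transpose, across the adjunction bijection
\[
\Grdset\big( ([X,X] \mysquare [X,X]) \mysquare X,\, X \big) \;\cong\; \Grdset\big( [X,X] \mysquare [X,X],\, [X,X] \big),
\]
of the composite
\[
([X,X] \mysquare [X,X]) \mysquare X \xrightarrow{\ \cong\ } [X,X] \mysquare ([X,X] \mysquare X) \xrightarrow{\ \id_{[X,X]} \mysquare\, \mathrm{ev}\ } [X,X] \mysquare X \xrightarrow{\ \mathrm{ev}\ } X ,
\]
where the first arrow is the inverse of the associator $\alpha$ from the monoidal structure theorem. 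The unit $e$ is defined to be the transpose of the left unitor $\lambda_X : \{*\} \mysquare X \to X$. Under the elementwise description of $[X,X]_n$ recorded in the previous section one checks directly that $m$ carries a pair $(\gamma;\, \gamma_1 \cdots \gamma_n)$---an arity-$n$ operation together with a string of $n$ operations---to the operation which splits an incoming word into blocks of the appropriate lengths, applies each $\gamma_i$ to its block, and then feeds the resulting $n$ outputs into $\gamma$; this is exactly operadic composition, and $e$ selects the identity operation in $[X,X]_1$.

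It then remains to verify the two monoid axioms. Because the adjunction transposition is a bijection, associativity of $m$ reduces to the equality of the two composites $[X,X]^{\mysquare 3} \mysquare X \to X$ obtained by transposing the two sides of the associativity square, each being a chain of evaluations and associators bracketing the three factors in one of the two possible orders; their coincidence follows from the naturality of $\mathrm{ev}$ together with the pentagon coherence established for $\alpha$. Likewise the unit laws reduce, after transposition, to identities among $\mathrm{ev}$, $\lambda_X$, the unitors, and $\alpha$, all of which are consequences of the triangle coherence axiom. I expect the main obstacle to be exactly this coherence bookkeeping: since $\mysquare$ is genuinely non-strict, the associator and unitors must be threaded through every step and the pentagon and triangle identities invoked, rather than treating composition of operations as though it were strictly associative. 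Once those coherence diagrams are seen to commute, $[X,X]$ is a monoid in $(\Grdset, \mysquare)$ and hence, by the earlier theorem, $taut(X)$ is a nonsymmetric operad.
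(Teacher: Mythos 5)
Your proposal is correct and follows essentially the same route as the paper: both construct the internal endomorphism monoid by taking the unit $e$ to be the transpose of the left unitor $\lambda_X$, the multiplication $m$ to be the transpose of the double evaluation $\mathrm{ev}\circ(\id\mysquare\mathrm{ev})$ built from the counit of the adjunction $-\mysquare X\dashv[X,-]$, and then verifying the monoid axioms on the transposed (curried) diagrams. The only cosmetic differences are that you thread the associator explicitly where the paper invokes MacLane's coherence theorem to suppress parentheses, and you correctly name the evaluation map as the counit where the paper calls it the unit.
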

		\begin{proof}
		We must show that $taut(X)$ is a monoid in $\Grdset$ with respect to the tensor product $\mysquare$.  The operad identity is given by the map $e: \{*\} \rightarrow [X,X]$ which is constructed as the currying of the left unitor $\lambda_X: \{*\} \mysquare X \rightarrow X$ for the monoidal structure in $\Grdset$.  The composition map $m:[X,X] \mysquare [X,X] \rightarrow [X,X]$ is similarly constructed in the following way.  Consider the counit $\epsilon_A: [A,-] \mysquare A \Rightarrow \id_{\Grdset}$ of the hom-tensor adjunction in $\Grdset$ between $- \mysquare A$ and $[A,-]$, which we can think of as $\textit{evaluation at}$ $A$.  It has components
		$\epsilon^B_A:[A,B]\mysquare A \rightarrow B$ for each graded set $A$.  We then get a map
		$$\kappa: [X,X] \mysquare ([X,X] \mysquare X) \rightarrow [X,X] \mysquare X \rightarrow X$$
		which is the composite $\kappa := \epsilon^X_X(\id_X \mysquare \epsilon^X_X)$.  The operad multiplication for $[X,X]$ is then the currying of the map $\kappa$.  It then remains only to show that $taut(X):[X,X] \rightarrow \mathbb{N}$ together with $e: \{*\} \rightarrow [X,X]$ and $m: [X,X] \mysquare [X,X] \rightarrow [X,X]$ satisfy the commutative diagrams required of a monoid object in $\Grdset$.  This can be seen by first currying the maps in the relevant diagrams and checking to see that these new curred diagrams, whose commutativity is equivalent with that of the originals, commute.  Here we have dropped all parentheses by MacLane's coherence theorem applied to $\Grdset$.  We first consider the diagram
		$$\adjustbox{max width=\columnwidth}{\xymatrix{[X,X] \mysquare [X,X] \mysquare [X,X] \mysquare X \ar[rr]^-{m \mysquare \id_{[X,X]} \mysquare \id_X} \ar[rrdd]^{\hskip2em \id_{[X,X]} \mysquare \id_{[X,X]} \mysquare \epsilon_X^X} \ar[dddd]_{\id_{[X,X]} \mysquare m \mysquare \id_{X}} & & [X,X] \mysquare [X,X] \mysquare X \ar[rr]^-{\id_{[X,X]} \mysquare \epsilon_X^X} & & [X,X] \mysquare X \ar[dddd]^{\epsilon_X^X} \\
		\\
		& & [X,X] \mysquare [X,X] \mysquare X \ar[rruu]^{m \mysquare \id_X} \ar[dd]^{\id_{[X,X]} \mysquare \epsilon_X^X} & & \\
		\\
		[X,X] \mysquare [X,X] \mysquare X \ar[rr]^-{\id_{[X,X]} \mysquare \epsilon_X^X} & & [X,X] \mysquare X \ar[rr]^{\epsilon_X^X} & & X
		}}$$
		whose commutativity is equivalent with that of the diagram asserting associativity of our multiplication $m$.  Note that the commutativity of the bottom left and right squares follows from the way we defined $m$ as the currying of two sequential multiplication operations.  The remaining top square then commutes by the functoriality of the composition tensor product.  We next consider the diagram
		$$\adjustbox{max width=\columnwidth}{\xymatrix{\{*\} \mysquare [X,X] \mysquare X \ar[rr]^-{e \mysquare \id_{[X,X]} \mysquare \id_X} \ar[dd]_{\id_{\{*\}} \mysquare \epsilon^X_X} & & [X,X] \mysquare [X,X] \mysquare X \ar[rr]^-{\id_{[X,X]} \mysquare \epsilon^X_X} & & [X,X]\mysquare X \ar[dd]^{\epsilon^X_X} \\
		\\
		\{*\} \mysquare X \ar[rrrruu]^{e \mysquare \id_{X}} \ar[rrrr]_{\lambda_{X}} & & & & X
		}}$$
		which comes from currying the maps from the needed left-sided unit diagram.  Here the top left square commutes by the functoriality of the composition tensor product.  The bottom right triangle commutes by the definition of $e$.  We finally consider the diagram
		$$\adjustbox{max width=\columnwidth}{\xymatrix{[X,X] \mysquare \{*\} \mysquare X \ar[rr]^-{\id_{[X,X]} \mysquare e \mysquare \id_X} \ar[dd]_{\id_{[X,X]} \mysquare \lambda_X} & & [X,X] \mysquare [X,X] \mysquare X \ar[rr]^-{\id_{[X,X]} \mysquare \epsilon^X_X} \ar[ddll]^{\id_{[X,X]} \mysquare \epsilon^X_X} & & [X,X] \mysquare X \ar[dd]^{\epsilon^X_X} \\
		\\
		[X,X] \mysquare X \ar[rrrr]_{\epsilon^X_X} & & & & X
		}}$$
		which comes from currying the maps from the needed right-sided unit diagram.  The top left triangle of this diagram commutes by the definition of $e$.  The bottom right square commutes trivially.  It therefore follows that $(taut(X),m,e)$ is a monoid in $\Grdset$.
		\end{proof}
																				
		Note that the operad identity map $e: \{*\} \rightarrow [X,X]$ is the canonical morphism that maps the singleton $*$ to the element of [X,X] corresponding to the identity set map on $X$, while the composition map $m:[X,X] \mysquare [X,X] \rightarrow [X,X]$ is the canonical map which takes a pair $(a,\omega) \in [X,X] \mysquare [X,X]$ and composes each of the letters from the word $\omega \in T([X,X])$ with each of the respective inputs for the `operation' $a \in [X,X]$.  Hence we always have a natural way to equip $taut(X)$ with the structure of a nonsymmetric operad in \Grdset.  This construction allows us to make the following definitions.

        \begin{definition}
            A graded set $X$ is called a $\emph{degenerate graded set}$ if the arity map factors as $x = [0] \circ !_{X}$, where $[0]: \{*\} \rightarrow T(\{*\})$ is the `name of zero' map which identifies the empty word in $T(\{*\})$.
        \end{definition}

        We say that a degenerate graded set is concentrated over zero, or in degree zero, as they are precisely those graded sets with the property that all elements have arity zero.  Note that every set $X$ can canonically be associated to a degenerate graded set by equipping $X$ with the arity map that sends every element to zero.  This allows us to concisely define algebras for an operad.
																				
		\begin{definition}
			Let $o:O \rightarrow \mathbb{N}$ be an operad.  A $\emph{left }O\emph{-module}$ is a morphism of operads $f:O \rightarrow taut(X)$ for some graded set $x:X \rightarrow \mathbb{N}$.  An $O\emph{-algebra in}$ $\Set$ is a left $O$-module such that $X$ is a degenerate graded set. 
		\end{definition}
																					
		In the situation above, we say that the operad $O$ acts on the graded set $X$.  Note here that the algebras for an operad in this sense would be more general than that of the algebras for a classically defined operad if we did not require that $X$ be concentrated over zero.  These graded algebras are referred to as left modules \cite{Fresse2009Modules}. In order for the operad to act on the set $X$ as an ordinary set, as opposed to a graded set, we must think of ordinary sets as being graded sets concentrated over zero.  In this way the $n$-th component of the tautological operad defined on the graded set $x:X \rightarrow \mathbb{N}$ concentrated over zero corresponds exactly to the classic definition of the tautological operad on the set $X$ in the sense that $[X,X]_n$ would consist of pairings of a length $n$ word from $X$, whose letters each have arity zero (i.e. $n$ elements from the set $X$), to an element of $X$ whose arity is the sum of the arities of the $n$ letters which comprised the source word, which is also zero.  Hence $[X,X]_n$ can be thought of as associating to each string of $n$ elements from $X$ a single element of $X$.  Thus $[X,X]_n \cong \Set(X^n,X)$ in $\Set$.

        In \cite{leinster2004higher} Leinster gives several equivalent definitions of an algebra for an operad.  One key definition notes that the free monoid monad $T$ can be used to construct a monad $T_O$ associated to a fixed operad $O$.  The monad sends a set $X$ to the set $T_O(X)=\coprod_{n \in \mathbb{N}} O(n) \times X^n$, with the multiplication and unit structure for the operad $O$ inducing a monad structure on $T_O$.  Then, an algebra for the monad $T_O$ is exactly the same as an algebra for the operad $O$.  More concretely, an algebra for the operad $O$ is precisely a set $X$ together with a structure map $h:T_O(X) \rightarrow X$ satisfying the necessary compatibility with composition and identities in $O$.

        We can rephrase this notion of algebra using the structure of $\mysquare$ in the following way.  Remember that any set $X$ can be considered a degenerate graded set by equiping it with the arity map that sends every element to $0$.  So we can then associate the set $T_O(X)$ with the graded set $O \mysquare X$.  The algebra structure map $h:T_O(X) \rightarrow X$ can be thought of as a graded set map $h:O \mysquare X \rightarrow X$ which can be curried using the closed structure for $\mysquare$ to get a graded set map $O \rightarrow [X,X]$ which is precisely the notion of an algebra given above.
																					
		We will see a similar relationship between these notions of algebra later when discussing the analogous construction for globular set.  Before moving on, we immediately get the following theorem by defining algebras this way.
																					
		\begin{theorem}
			An algebra for an operad $O$ is an algebra for every operad $P$ which maps to $O$.  In particular, an algebra for $O$ is an algebra for every sub-operad of $O$.
		\end{theorem}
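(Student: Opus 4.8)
The plan is to prove the statement by a direct composition argument, using only the definition of an algebra as an operad morphism into a tautological operad together with the elementary fact that operad homomorphisms compose.

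First I would unwind the definitions. By the preceding definition, a (graded) algebra for $O$ is precisely an operad morphism $\xi: O \rightarrow taut(X)$ for some graded set $x: X \rightarrow \mathbb{N}$. So suppose $X$ is an algebra for $O$, witnessed by such a $\xi$, and suppose we are also given an operad morphism $g: P \rightarrow O$. Next I would invoke the fact that homomorphisms of operads compose: since each of $\xi$ and $g$ is a sequence of maps preserving the identity operation and the composition maps, their componentwise composite $\xi \circ g$ does so as well, and is therefore again an operad morphism $\xi \circ g: P \rightarrow taut(X)$. But this is exactly the data of an algebra for $P$ carried by the very same graded set $X$; hence $X$ is an algebra for $P$.

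I would then remark that the witnessing graded set is unchanged, only the structure map being precomposed with $g$, so that the argument applies verbatim to the notion of an algebra in $\Set$: the requirement that $X$ be concentrated in degree $0$ is a property of $X$ alone and is unaffected by changing the acting operad. For the final clause, I would observe that a sub-operad $P$ of $O$ comes equipped with an inclusion that is itself an operad morphism $P \hookrightarrow O$; applying the general case to this inclusion gives the result.

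I do not anticipate any genuine obstacle. The only point requiring verification is the stability of the two homomorphism conditions (preservation of the identity and of the $\mysquare$-composition) under composition, which is immediate; the remaining content of the theorem is simply the observation that algebras are contravariantly functorial in the operad, so that any operad mapping to $O$ inherits every $O$-algebra by pullback along that map.
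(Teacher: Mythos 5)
Your proposal is correct and takes essentially the same route as the paper: both arguments simply precompose the structure morphism $f:O \rightarrow taut(X)$ with the given morphism $g:P \rightarrow O$ to obtain $f \circ g: P \rightarrow taut(X)$, and both handle the sub-operad case as the special instance where $g$ is the inclusion (equivalently, a restriction of $f$). If anything, your write-up is slightly more careful than the paper's, since you explicitly verify that composites of operad homomorphisms are operad homomorphisms and note that the degree-$0$ condition depends only on $X$.
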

																						
		\begin{proof}
			Recall that any operad $O$ is a monoid in $\Grdset$ and an algebra for such an operad is specified by a morphism of graded sets $f: O \rightarrow taut(X)$.  Let $P$ be another operad and $g: P \rightarrow O$ be a morphism of graded sets.  Then the map $f(g): P \rightarrow taut(X)$ induces on $X$ the structure of a $P$-algebra.  Moreover, as any sub-operad $\tilde{O}$ of $O$ comes from removing a certain subset of elements from $O$, the algebra for $O$ specified by the map $f$ is also an algebra for $\tilde{O}$ simply by restricting $f$ to $\tilde{O}$.  In fact, any further subsets of $\tilde{O}$ corresponds to a further restriction of $f$, showing that any algebra for $O$ is an algebra for every sub-operad $\tilde{O}$. 
		\end{proof}

	\section{Collections}\label{sec-collections}
		We begin this section by recalling the notion of a globular set.  To do so requires the following category $\mathbb{G}$, known as the \textit{globe category}.  The category $\mathbb{G}$ has $\mathbb{N}$ as its set of objects.  Its morphisms are generated by $\sigma_n: n \rightarrow n+1$ and $\tau_n: n \rightarrow n+1$ for all $n \in \mathbb{N}$ subject to the relations $\sigma_{n+1} \circ \sigma_n = \tau_{n+1} \circ \sigma_n$ and $\sigma_{n+1} \circ \tau_n = \tau_{n+1} \circ \tau_n$.

		\begin{definition}
			A \emph{globular set} is a contravariant functor $G: \mathbb{G} \rightarrow \Set$. The category $\Glob$ of globular sets is the category of presheaves on $\mathbb{G}$.
		\end{definition}

		More concretely, a globular set $G = (\{G_n\}_{n\in \mathbb{N}},\{s_G^n\},\{t_G^n\})$ is specified by the following set of data: a countable collection of sets $\{G_n\}_{n \in \mathbb{N}}$, where each $G_n$ is called the set of $n$-cells of $G$, together with source and target maps $s_G = \{s_G^n:G_n \rightarrow G_{n-1}\}$ and $t_G = \{t_G^n:G_n \rightarrow G_{n-1}\}$ subject to the relations $s_G^n \circ s_G^{n+1} = s_G^n \circ t_G^{n+1}$ and $t_G^n \circ s_G^{n+1} = t_G^n \circ t_G^{n+1}$ in each dimension $n \in \mathbb{N}$.
		
		\begin{definition}
			Let $G: \mathbb{G} \rightarrow \Set$ and $H: \mathbb{G} \rightarrow \Set$ be globular sets. A \emph{globular set homomorphism} is a natural transformation $\varphi: G \Rightarrow H$ between globular sets.  In particular, $\varphi = \{\varphi_n:G_n \rightarrow H_n\}_{n \in \mathbb{N}}$ is given by a sequence of set maps which make the two diagrams
			$$\xymatrix{G_n \ar[rr]^{\varphi_n} \ar[dd]_{s_G^n} & & H_n \ar[dd]^{s_H^n} & & G_n \ar[rr]^{\varphi_n} \ar[dd]_{t_G^n} & & H_n \ar[dd]^{t_H^n} \\
						\\
						G_{n-1} \ar[rr]^{\varphi_{n-1}} & & H_{n-1} & & G_{n-1} \ar[rr]^{\varphi_{n-1}} & & H_{n-1} \\
			}$$
			commute for all $n \in \mathbb{N}$.
		\end{definition}
		
		Together with the natural transformations between them, globular sets form a category which we shall here denote $\Glob$.

		Another integral piece of structure needed to define globular operads is the free strict $\omega$-category monad $\mathcal{T}: \Glob \rightarrow \Glob$.  Just like the monad $T$ above, $\mathcal{T}$ is cartesian.  This fact, as well as a more detailed explanation of its construction and use, can be found in \cite{leinster2004higher}.  Briefly, this monad takes a globular set $\mathcal{X}$ and returns the underlying globular set of the free strict $\omega$-category generated by $\mathcal{X}$.  In other words, it takes a globular set $\mathcal{X}$ and constructs the globular set $\mathcal{T}(X)$ consisting of all possible pasting diagrams, or as we will often describe them, `globular words' built out of the cells of $\mathcal{X}$.  The motivation for calling such a pasting diagram a word is that a pasting diagram, all of whose cells are cells in $\mathcal{X}$, can be thought of as a generalization of the notion of a word in some set $Y$ (i.e. a string of concatenated elements from $Y$).  The main difference between the two notions is that a globular word can be built out of concatenation of cells along any of their boundary cells, as opposed to the classical setting in which elements, or letters, can only be composed as horizontal strings.  So in this way we can think of words on a set (in either setting) as simply an element, or cell, in the underlying object of the free monoid, or $\omega$-category, on the respective notion of set.

		For our purposes we will be specifically interested in the globular set $\Tone$ generated by the terminal globular set $\one$ which has exactly one cell in each dimension.  It is precisely $\Tone$ which allows us to generalize our notion of the arity of an operation.  In the classical case, arity is essentially the `word length' of the element over which the operation sits with respect to the operad's underlying graded set structure.  In this more general context, the arity of a cell in a globular set $\mathcal{X}$ is the `shape', or more precisely the globular pasting diagram which names a cell in $\Tone$, specified by the globular set map from $\mathcal{X}$ to $\Tone$.

		\begin{definition}
			A \emph{collection} is a globular set $\mathcal{X}$ equipped with a globular set homomorphism $x:\mathcal{X} \rightarrow \Tone$ called the \emph{arity map}.
		\end{definition}

		It is often convenient to use the diagram
		$$\xymatrix{\mathcal{X} \ar[d]_{x} \\
		\Tone}$$
		to represent a collection in order to emphasize that the cells in $\mathcal{X}$ `sit over' a specified `shape' in $\Tone$.  However, for readability, just as with graded sets, we will often represent a collection by simply writing its underlying globular set $\mathcal{X}$.

		\begin{definition}
			Let $x: \mathcal{X} \rightarrow \Tone$ and $y: \mathcal{Y} \rightarrow \Tone$ be a pair of collections.  A \emph{collection homomorphism} between them is a globular set map $f:\mathcal{X} \rightarrow \mathcal{Y}$ which makes the triangle
			$$\xymatrix{\mathcal{X} \ar[rr]^{f} \ar[dr]_{x} & & \mathcal{Y} \ar[dl]^{y}\\
			& \Tone & }$$
			commute.
		\end{definition}

		We shall use $\Coll$ to denote the category of collections. Note that $\Coll$ is simply the slice category $\Glob / \Tone$.  Furthermore, $\Coll$, as with $\Grdset$, has a monoidal structure with respect to the composition tensor product $\mysquare : \Coll \times \Coll \rightarrow \Coll$ defined analogously as follows:
		
		\begin{definition}
			Let $x: \mathcal{X} \rightarrow \Tone$ and $y: \mathcal{Y} \rightarrow \Tone$ be a pair of collections.  Their composition tensor product $x \mysquare y : \mathcal{X} \mysquare \mathcal{Y} \rightarrow \Tone$ is defined by the diagram:
			$$\xymatrix{\mathcal{X} \mysquare \mathcal{Y} \pullbackmark{0,2}{2,0} \ar[rr] \ar[dd] & & \mathcal{T}(\mathcal{Y}) \ar[r]^-{\mathcal{T}(y)} \ar[dd]^{\mathcal{T}(!_{\mathcal{Y}})} & \Tsquared \ar[r]^-{\mu_{\one}} & \Tone \\
				& &  \\
				\mathcal{X} \ar[rr]^{x} & & \Tone}$$
			where $!_{\mathcal{Y}}: \mathcal{Y} \rightarrow \one$ is the unique map from $\mathcal{Y}$ to the terminal globular set.  The underlying globular set $\mathcal{X} \mysquare \mathcal{Y}$ is the pullback of $x$ and $\mathcal{T}(!_{\mathcal{Y}})$ with the arity globular set map $x \mysquare y$ defined to be the composition along the top row.
		\end{definition}
		
		This definition makes $\mathcal{X} \mysquare \mathcal{Y}$ the unique collection whose cells are pairs $(a,\psi)$ consisting of a cell $a \in \mathcal{X}$ and a `globular word' $\psi$ of cells from $\mathcal{Y}$ indexed by the arity shape of $a$.  In $\mathcal{X} \mysquare \mathcal{Y}$, the `globular letters' in the globular word $\psi \in \mathcal{T}(\mathcal{Y})$ may be compatibly `glued together' along the shape of $x(a) \in \Tone$ in the sense that each globular letter is a $k$-cell that can replace a particular $k$-cell in the pasting diagram $x(a)$. We can thus think of the cells of $\mathcal{X} \mysquare \mathcal{Y}$ as composable pairs specified by a cell of $\mathcal{X}$ and `words of cells' over $\mathcal{Y}$ indexed by the shape of the cell from $\mathcal{X}$.  Furthermore, the arity for cells in $\mathcal{X} \mysquare \mathcal{Y}$ comes as an extension of the arity for cells of $\mathcal{Y}$ when they have been `plugged into' the cell $a$, or rather, `glued together' in the `shape' of $a$ specified by the morphism $x$.  To see this, note that the map $\mathcal{T}(y)$ takes a word of cells from $Y$ and returns a shape of shapes (i.e. a cell in $\mathcal{T}^2(\one)$ which is named by a pasting diagram of pasting diagrams).  The multiplication transformation $\mu_{\one}$ at $\one$ then takes this shape of shapes and returns the shape we would get if we replaced the cells in $x(a)$ with the specified arity shapes of each letter from $\mathcal{Y}$.  More precisely, $\mu_{\one}$ performs the pasting compositions which sends a pasting diagram of pasting diagrams to the particular pasting diagram obtained by replacing the $k$-cells of the `outer diagram' with the $k$-dimensional pasting diagrams, or the `inner diagrams', indexed by the cells of outer diagram.  It is analogous to the operation of taking a word of words from the free monoid on the free monoid on a set $X$ and sending it to a single word in the free monoid on $X$ by concatenating the inner words as letters of the outer word.  In a sense, the inner words are `plugged into' the $k$th letter slot of the outer word to make a composite word.  In the globular setting however, the plugging in occurs in the same fashion, but with the letter slots in each word being of varying dimensions concatenated along their various boundary cells, rather than simple horizontal strings of letters from a set.

		Before moving on, there are several special collections that are worth noting.  The first is the terminal collection $\Tone \rightarrow \Tone$ which functions as the unit for the Cartesian product in $\Coll$.  There is also the initial trivial collection $\emptyset \rightarrow \Tone$ whose arity map is the vacuous mapping from the empty globular set.  There is the collection $I: \one \hookrightarrow \Tone$ whose arity map is simply the inclusion of generators.  This collection is important because it is the unit for $\mysquare$ in $\Coll$ just as $i:\{*\} \hookrightarrow T(\{*\})$ was the unit for $\mysquare$ in $\Grdset$.  It is worth noting that this inclusion of generators into $\Tone$ is simply the component at $\one$ of the unit map $\id \Rightarrow \mathcal{T}$ for the monad $\mathcal{T}$.
		
		One final and important collection is given by the globular set map $[id]: \one \rightarrow \Tone$.  To understand this collection, note first that among the many cells in $\Tone$ are the underlying globular cells of identity morphisms created when $\mathcal{T}$ produces the underlying globular set of the free strict $\omega$-category on $\one$.  Among these identities are the following special identities.  There is the underlying 1-cell of the identity on the single vertex in $\one$.  This identity map then has an identity 2-cell that sits over it.  And over this identity 2-cell there is an identity 3-cell that sits above it.  Continuing this process, we see that there is an inclusion of the terminal object $\one$ into $\Tone$ whose cells are exactly the iterated identities of the single object.  This sub-object can be thought of as the globular $\omega$-analogue of the additive identity $0 \in \mathbb{N}$ from the graded set case.  The map $[id]$ is then the globular set map which sends each of the single $n$-cells in $\one$ to the corresponding iterative identity cell of dimension $n$ from the construction just described.  The map $[id]$ is, in this way, an identification of this `tower' of iterated identities as the particular identities for each $n$-dimensional pasting composition.

	\section{Globular operads}\label{sec-glob-operads}
		We can now see that $\Coll$ has the following monoidal structure.
		
		\begin{theorem}
			The product $\mysquare$ together with the collection $I: \one \hookrightarrow \Tone$ gives $\Coll$ the structure of a monoidal category.
		\end{theorem}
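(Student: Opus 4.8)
The plan is to exhibit an associator and two unitors for $\mysquare$ with unit $I$ and then verify the coherence axioms, invoking at each stage that the free strict $\omega$-category monad $\mathcal{T}$ is cartesian. First I would construct the associator $\alpha_{\mathcal{X},\mathcal{Y},\mathcal{Z}}: \mathcal{X} \mysquare (\mathcal{Y} \mysquare \mathcal{Z}) \to (\mathcal{X} \mysquare \mathcal{Y}) \mysquare \mathcal{Z}$ for collections $x:\mathcal{X} \to \Tone$, $y:\mathcal{Y} \to \Tone$, and $z:\mathcal{Z} \to \Tone$. Writing out the iterated pullback that defines $(\mathcal{X} \mysquare \mathcal{Y}) \mysquare \mathcal{Z}$, I would factor its top row into three composable pullback squares: a naturality square for $\mu$ on the right, the image under $\mathcal{T}$ of the square defining $\mathcal{Y} \mysquare \mathcal{Z}$ in the middle (a pullback because $\mathcal{T}$ preserves pullbacks), and a leftmost square forced to be a pullback by the pasting lemma. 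Pasting this leftmost square onto the bottom defining square of $\mathcal{X} \mysquare \mathcal{Y}$ then presents $\mathcal{X} \mysquare (\mathcal{Y} \mysquare \mathcal{Z})$ as a pullback of the same cospan, so the universal property yields the comparison isomorphism $\alpha_{\mathcal{X},\mathcal{Y},\mathcal{Z}}$ of underlying globular sets.

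The second step, where the monad laws do the real work, is to check that $\alpha_{\mathcal{X},\mathcal{Y},\mathcal{Z}}$ respects the arity maps into $\Tone$, so that it is an isomorphism of collections and not merely of globular sets. For this I would chase the diagram establishing $\mu_{\one} \circ \mathcal{T}(y \mysquare z) = \mu_{\one} \circ \mathcal{T}(z) \circ \mu_{\mathcal{Z}} \circ \mathcal{T}(\pi_2)$, whose cells commute by the naturality of $\mu$ and by the monad associativity law $\mu_{\one} \circ \mathcal{T}(\mu_{\one}) = \mu_{\one} \circ \mu_{\mathcal{T}(\one)}$. Combined with the identification $\phi \circ \alpha_{\mathcal{X},\mathcal{Y},\mathcal{Z}} = \pi_2$ of the leading projections, this gives $x \mysquare (y \mysquare z) = \big((x \mysquare y) \mysquare z\big) \circ \alpha_{\mathcal{X},\mathcal{Y},\mathcal{Z}}$, confirming that $\alpha_{\mathcal{X},\mathcal{Y},\mathcal{Z}}$ is a collection isomorphism.

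For the unitors I would compute $\mathcal{X} \mysquare I$ and $I \mysquare \mathcal{X}$ cell by cell, using that $I: \one \hookrightarrow \Tone$ is the component at $\one$ of the monad unit $\id \Rightarrow \mathcal{T}$. On one side, the pullback defining $\mathcal{X} \mysquare I$ pairs each cell of $\mathcal{X}$ with its own arity shape in $\Tone$, so the right unitor $\rho_{\mathcal{X}}: \mathcal{X} \mysquare I \to \mathcal{X}$ is first projection; on the other, $I \mysquare \mathcal{X}$ selects the length-one globular words in $\mathcal{T}(\mathcal{X})$, which are exactly the cells of $\mathcal{X}$, so the left unitor $\lambda_{\mathcal{X}}: I \mysquare \mathcal{X} \to \mathcal{X}$ is second projection under this identification. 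The monad unit laws force the arity data on both sides to agree, making both unitors collection isomorphisms.

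Finally, the coherence conditions follow formally rather than by computation: since each component of $\alpha$ is the unique comparison between two pullbacks of a single cospan, the two legs of the pentagon are induced maps out of a common pullback and hence coincide, while the unitors being projections collapses both triangle identities on the nose. I expect the genuine obstacle to be the arity-preservation step for $\alpha$: unlike the graded-set case, here $\mu_{\one}$ realizes the pasting of a pasting-diagram-of-pasting-diagrams, so I would need to argue that the monad associativity square truly implements substituting each inner globular diagram into the corresponding $k$-cell of the outer diagram, ensuring that both routes around the reassociation return the very same cell of $\Tone$ rather than merely diagrams of the same shape.
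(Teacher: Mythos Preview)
Your proposal is correct and follows essentially the same approach as the paper: the paper's proof here reads in full ``The proof is exactly analogous to that of the graded set case,'' and what you have written is precisely that analogous argument transported from $\Grdset$ to $\Coll$, with the free-monoid monad $T$ replaced by the free strict $\omega$-category monad $\mathcal{T}$ and the cartesianness of $\mathcal{T}$ supplying the pullback squares at each step. Your worry about the arity-preservation step is well placed but already handled by the monad associativity square $\mu_{\one} \circ \mathcal{T}(\mu_{\one}) = \mu_{\one} \circ \mu_{\mathcal{T}(\one)}$, exactly as in the graded case; no additional geometric input about pasting is required beyond what that equation encodes.
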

		
		\begin{proof}
			The proof is exactly analogous to that of the graded set case.
		\end{proof}

		\begin{definition}
			A \emph{globular operad} is a monoid in $\Coll$ with respect to the monoidal product $\mysquare$.
		\end{definition}

		As with classical operads, defining globular operads in this way is concise but unfortunately condensed and abstract.  Let us briefly unpack this definition to get a feel for why such a structure should be called an operad.  Consider the following globular operad $\mathcal{O}$ given by an underlying collection $o:\mathcal{O} \rightarrow \Tone$ together with the two collection homomorphisms $m: \mathcal{O} \mysquare \mathcal{O} \rightarrow \mathcal{O}$ and $e: \one \rightarrow \mathcal{O}$, called multiplication and unit respectively.  Seeing why this data should intuitively make sense as an operad is analogous to seeing how monoids in $\Grdset$ with respect to the $\mysquare$ tensor product are classical nonsymmetric operads.  As a monoid internal to $\Coll$ these must satisfy the following associativity and unital conditions:
		$$\xymatrix{(\mathcal{O} \mysquare \mathcal{O}) \mysquare \mathcal{O} \ar[rr]^{\alpha} \ar[d]_{m \mysquare \id_{\mathcal{O}}} & & \mathcal{O} \mysquare (\mathcal{O} \mysquare \mathcal{O}) \ar[rr]^{\id_{\mathcal{O}} \mysquare m} & & \mathcal{O} \mysquare \mathcal{O} \ar[d]^{m} \\
		\mathcal{O} \mysquare \mathcal{O} \ar[rrrr]_{m} & & & & \mathcal{O} }$$

		$$\xymatrix{\one \mysquare \mathcal{O} \ar[rr]^{e \mysquare \id_{\mathcal{O}}} \ar[drr]_{\lambda} & & \mathcal{O} \mysquare \mathcal{O} \ar[d]_{m} & & \mathcal{O} \mysquare \one \ar[ll]_{\id_{\mathcal{O}} \mysquare e} \ar[dll]^{\rho} \\
		& & \mathcal{O} & & }$$
		Note then that the morphism $m$ gives us a way of specifying how to compose cells of $\mathcal{O}$ with other cells of $\mathcal{O}$ in a way that is well-defined and associative.  This again mirrors nonsymmetric classical operads which come equipped with a set of operations which may in turn be plugged into each other to produce composite operations.  Furthermore, the morphism $e$ specifies which cells of $\mathcal{O}$ behave as identities of varying dimensions for this operadic composition.  We may even recover the classical notion of a nonsymmetric operad from this more general construction by realizing that a graded set is essentially a collection in which the underlying globular set consists only of 1-cells, each of which has as its boundary the same single 0-cell.

	\section{Exponentials and the internal hom in \Coll}\label{sec-exp-coll}
        Having seen how to construct internal homs in $\Grdset$, we now wish to see the analogous construction in $\Coll$.  In fact, the entire process will be completely analogous to the constructions above for graded sets.  We will proceed by first showing how a morphism of globular sets gives rise to three canonical functors.  We will then, again as a warm up, see how these can be used to construct exponential objects in $\Coll$.  We will then conclude this section by showing how a slight modification of the process for building exponentials gives rise to the process for building the internal hom with respect to the product $\mysquare$ in $\Coll$.
 
		Let $\phi: \mathcal{A} \rightarrow \mathcal{B}$ be a globular set map.  Just as with the construction of classical operads above, we again get an induced functor $\phi^*:\Glob/\mathcal{B} \rightarrow \Glob/\mathcal{A}$ between slice categories called a $\textit{change of base}$ functor which is defined analogously.  It takes a globular set map and returns its pullback along $\phi$.  Once again the functor $\phi^*$ has both a left and right adjoint.  Its left adjoint $\Sigma_{\phi}: \Glob/\mathcal{A} \rightarrow \Glob/\mathcal{B}$ is also composition with $\phi$.  Its right adjoint $\Pi_{\phi}: \Glob/\mathcal{A} \rightarrow \Glob/\mathcal{B}$ is again a bit complicated to describe in general.   More detail on the general construction of $\Pi_{\phi}$ can again be found in \cite{maclane1994sheaves}.  We nonetheless know that such a functor must exist.  We can intuitively think of the fibered globular sets in the image of $\Pi_{\phi}(\xi:\mathcal{X} \rightarrow \mathcal{A})$ as the globular set fibered over $\mathcal{B}$ of generalized sections of the globular set map $\xi$, by analogy to the construction of defining the right adjoint in the graded set case.

		We can then construct the functor $- \times \mathcal{B}: \Coll \rightarrow \Coll$ for a fixed collection $b:\mathcal{B} \rightarrow \Tone$ analogously as before.  It can be written as a composition of the functors defined above.  We get that
		$$- \times \mathcal{B} = \Sigma_{b}b^*$$
		again since $\Coll$ is a slice category.  We can then immediately compute its right adjoint $-^{B}:\Coll \rightarrow \Coll$, which gives the exponentiation by $\mathcal{B}$ functor in $\Coll$, by taking the right adjoint of each factor in the composition and reversing the order in which they are composed, which leads to the following formula:
		$$-^B = \Pi_{b}b^*$$
		Again $b^*(a):\mathcal{A} \pullbackSub{a}{b} \mathcal{B} \rightarrow \mathcal{B}$ is simply second projection.  Thus we see that this functor takes a collection $a:\mathcal{A} \rightarrow \Tone$ and applies the functor $\Pi_{b}$ mentioned above to get an object $\Pi_{b}(b^*(a))$ in $\Coll$ of sections of the globular set projection map $b^*(a)$.  And since this object is in $\Coll$ we can, again by analogy to the graded set case, think of the cells in each fiber $\Gamma_{\sigma}$, for each $\sigma \in \Tone$ as a choice of how to associate cells of shape $\sigma$ from $\mathcal{B}$ to a cell of shape $\sigma$ in $\mathcal{A}$.  In other words, it again defines a map from $\mathcal{B}_{\sigma}$ to $\mathcal{A}_{\sigma}$.  Recall though that all maps of collections carry $\sigma$ fibers to $\sigma$ fibers by their definition as globular sets fibered over $\Tone$.  Thus we can think of the exponential object $\mathcal{A}^{\mathcal{B}}$ as the collection of globular set maps from $\mathcal{B}$ to $\mathcal{A}$ `cut up' into their $\sigma$ fiber restrictions for each $\sigma \in \Tone$.  However, unlike in the case involving graded sets, it is not immediately clear that there is a canonical way to do such a restriction.  But this is precisely the construction being captured by the functor $\Pi_b$.

		Now consider the functor $- \mysquare \mathcal{B}: \Coll \rightarrow \Coll$ for the same collection $b:\mathcal{B} \rightarrow \Tone$.  We will again construct the internal hom with respect to $\mysquare$ using the analogous procedure.  We write $- \mysquare \mathcal{B}$ as the following composition:
		$$- \mysquare \mathcal{B} = \Sigma_{\mu_{\one}}\Sigma_{\mathcal{T}(b)}\mathcal{T}(!_{\mathcal{B}})^*$$
		Note that this functor takes the collection $a:\mathcal{A} \rightarrow \Tone$ to the collection $a \mysquare b: \mathcal{A} \mysquare \mathcal{B} \rightarrow \Tone$, where the arity map $a \mysquare b$ is exactly the image of $\Sigma_{\mu_{\one}}\Sigma_{\mathcal{T}(b)}\mathcal{T}(!_{\mathcal{B}})^*(a)$.  Again as before, this is exactly composition in the augmented pullback diagram used to define the composition tensor product $\mysquare$ in $\Coll$.  Writing the functor $- \mysquare \mathcal{B}$ in this way, just as with $- \times \mathcal{B}$ above, allows us to again compute the appropriate right adjoint $[\mathcal{B},-]:\Coll \rightarrow \Coll$ by taking the right adjoint of each factor in the composition and reversing the order in which they are composed.  This then leads to the following formula:
		$$[\mathcal{B},-] = \Pi_{\mathcal{T}(!_{\mathcal{B}})}\mathcal{T}(b)^*\mu_{\one}^*$$
		We shall again consider first how the composite $\mathcal{T}(b)^*\mu_{\one}^*$ acts on a collection $a:\mathcal{A} \rightarrow \Tone$.  Recall that the map $\mathcal{T}(b)^*\mu_{\one}^*(a)$ is given as the topmost edge in the appropriate double pullback diagram to get the globular set map
		$$\mathcal{T}(b)^*\mu_{\one}^*(a): (\mathcal{A} \pullbackSub{a}{\mu_{\one}^{*}} \mathcal{T}^2(\one)) \pullbackSub{\mu_{\one}^{*}(a)}{\mathcal{T}(b)} \mathcal{T}(\mathcal{B}) \rightarrow \mathcal{T}(\mathcal{B})$$
		which is simply second projection.  We can intuitively think of this map as associating to each cell $\beta \in \mathcal{T}(\mathcal{B})$, which is named by a pasting diagram labeled by cells in $\mathcal{B}$, a pair $(\alpha, t) \in \mathcal{A} \times \mathcal{T}(\Tone)$ consisting of cell $\alpha \in \mathcal{A}$ and cell of cells $t$ of shape $\sigma \in \Tone$ (i.e. a globular word of cells in $\Tone$ indexed by the diagram of shape $\sigma$) such that the shape of $\alpha$ is the same as the shape of the cell obtained by gluing together the cells in $t$ per the pasting formula given by $\sigma$.  Moreover, the unlabeled cells of $t$ each have the same shape as the corresponding cells which make up the labeled diagram $\beta$.  We then apply $\Pi_{\mathcal{T}(!_{\mathcal{B}})}$ to $\mathcal{T}(b)^*\mu_{\one}^*(a)$ to get the desired internal hom.  

		Via this construction, we can now compute the internal hom $\mathcal{H}_{\mathcal{B},\mathcal{A}}: [\mathcal{B},\mathcal{A}] \rightarrow \Tone$ in $\Coll$ between any collections $b:\mathcal{B} \rightarrow \Tone$ and $a:\mathcal{A} \rightarrow \Tone$.  Although there are some subtle technical differences from the graded set case which are lurking in the construction encoded by the functor $\Pi_{\mathcal{T}(!_{\mathcal{B}})}$, we can intuitively think of cells in each fiber $[A,B]_{\sigma}$ of our internal hom in the following way.  Recall that the internal hom is constructed as the object of general sections of the globular set map $\mathcal{T}(b)^*\mu_{\one}^*(a)$ defined above.  Moreover, a cell $\beta \in \mathcal{T}(\mathcal{B})_{\sigma}$ can be thought of as a choice of cells $\{\beta_{\tau}\}_{\tau \in \sigma}$ from $\mathcal{B}$ glued together along halves of their boundaries as prescribed by the pasting formula given by the pasting diagram $\sigma$.  Or rather, we can think of them as a coloring of the diagram $\sigma$ by cells in $\mathcal{B}$.  This allows us to think of a cell $\gamma \in [\mathcal{B},\mathcal{A}]_{\sigma}$ as a choice of a cell $\alpha \in \mathcal{A}$ to correspond to each coloring of the diagram $\sigma$ by cells of $\mathcal{B}$ so that the shape of $\alpha$ is the same as the shape of the diagram obtained by gluing the cells $\{\beta_{\tau}\}_{\tau \in \sigma}$ together via the pasting formula given by $\sigma$.  In other words, a `map' in the internal hom is roughly a thing that takes a coloring of the diagram $\sigma \in \Tone$ by cells from the source and picks a cell of the target that has the same arity shape as the cells from the source after all the pasting compositions prescribed by the diagram $\sigma$ have been performed.

	\section{The globular tautological operad}\label{sec-glob-taut}
		Consider the collection $x:\mathcal{X} \rightarrow \Tone$.  We shall now construct the tautological operad on $\mathcal{X}$, denoted $Taut(\mathcal{X})$, analogously to the construction in the case of graded sets.  We again define $Taut(\mathcal{X}):= [\mathcal{X},\mathcal{X}]$ via the internal hom construction in $\Coll$.  The underlying collection for the tautological globular operad on $\mathcal{X}$ can be thought of as abstractly encoding all the possible operations that take a coloring of a pasting diagram of shape $\sigma \in \Tone$ by globular cells from $\mathcal{X}$ to a single globular cell from $\mathcal{X}$ whose shape is the same as the `word of cells' after the each of the pasting compositions prescribed by $\sigma$ are evaluated to give a composed cell in $\mathcal{X}$.  But since they are constructed using the internal hom, rather than the set valued hom, these `maps' from $\mathcal{X}$ to $\mathcal{X}$ naturally fiber over $\Tone$ so that we can again place a canonical operad structure on $Taut(\mathcal{X})$.  The operad identity is given by the map $\iota: \one \rightarrow [\mathcal{X},\mathcal{X}]$ which maps each single $k$-cell of $\one$ to the the respective $k$-cell of $[\mathcal{X},\mathcal{X}]$  which corresponds to the identity operation on $k$-cells of $\mathcal{X}$.  This map $\iota$ can be constructed canonically as the currying of the left unitor $\lambda_{\mathcal{X}}: \one \mysquare \mathcal{X} \rightarrow \mathcal{X}$ for the monoidal structure in $\Coll$.  The composition map $\nu:[\mathcal{X},\mathcal{X}] \mysquare [\mathcal{X},\mathcal{X}] \rightarrow [\mathcal{X},\mathcal{X}]$ is the canonical map which takes a pair $(a,w) \in [\mathcal{X},\mathcal{X}] \mysquare [\mathcal{X},\mathcal{X}]$ and composes each of the letters from the word $w \in \mathcal{T}([\mathcal{X},\mathcal{X}])$ with each of the respective inputs for the operation $a \in [\mathcal{X},\mathcal{X}]$.  It can be canonically constructed as follows.  Consider the counit $\epsilon^{\mathcal{A}}: [\mathcal{A},-] \mysquare \mathcal{A} \Rightarrow \id_{\Coll}$ of the hom-tensor adjunction in $\Coll$ between $- \mysquare \mathcal{A}$ and $[\mathcal{A},-]$, which has components
		$\epsilon^{\mathcal{B}}_{\mathcal{A}}:[\mathcal{A},\mathcal{B}]\mysquare \mathcal{A} \rightarrow \mathcal{B}$ for collection $\mathcal{A}$.  We then get a map
		$$\mathcal{K}: [\mathcal{X},\mathcal{X}] \mysquare ([\mathcal{X},\mathcal{X}] \mysquare \mathcal{X}) \rightarrow [\mathcal{X},\mathcal{X}] \mysquare \mathcal{X} \rightarrow \mathcal{X}$$
		which is the composite $\mathcal{K} := \epsilon^{\mathcal{X}}_{\mathcal{X}}(\id_{\mathcal{X}} \mysquare \epsilon^{\mathcal{X}}_{\mathcal{X}})$.  The operad multiplication for $[\mathcal{X},\mathcal{X}]$ is then the currying of the map $\mathcal{K}$.

		\begin{theorem}
			Given a collection $x: \mathcal{X} \rightarrow \Tone$, collection Taut($\mathcal{X}):[\mathcal{X},\mathcal{X}] \rightarrow \Tone$ admits the structure of a globular operad.
		\end{theorem}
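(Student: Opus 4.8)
The plan is to follow the graded-set proof of Section~\ref{sec-class-taut} essentially verbatim, transporting it along the dictionary that replaces $\Grdset$ by $\Coll$, the one-point set $\{*\}$ and its monad $T$ by the terminal globular set $\one$ and the free $\omega$-category monad $\mathcal{T}$, and the graded-set structure maps by their collection analogues $\iota$ and $\nu$ constructed just above the statement. Concretely, I must exhibit $(Taut(\mathcal{X}),\nu,\iota)$ as a monoid object in the monoidal category $(\Coll,\mysquare,I)$ — whose monoidal structure is the theorem of Section~\ref{sec-glob-operads} — by checking that $\nu$ is associative and that $\iota$ is a two-sided unit for $\nu$.

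First I would record the one structural fact that makes the whole argument run: that $\Coll$ is right closed with respect to $\mysquare$, so that for each collection $\mathcal{A}$ with arity map $a:\mathcal{A}\rightarrow\Tone$ the functor $-\mysquare\mathcal{A}$ is left adjoint to $[\mathcal{A},-]=\Pi_{\mathcal{T}(!_{\mathcal{A}})}\mathcal{T}(a)^*\mu_{\one}^*$, with evaluation $\eta^{\mathcal{B}}_{\mathcal{A}}:[\mathcal{A},\mathcal{B}]\mysquare\mathcal{A}\rightarrow\mathcal{B}$. This adjunction supplies, for every pair of collections, a natural currying bijection $\Coll(\mathcal{Y}\mysquare\mathcal{X},\mathcal{X})\cong\Coll(\mathcal{Y},[\mathcal{X},\mathcal{X}])$. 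Because currying is a bijection, commutativity of each monoid-axiom diagram for $\nu$ and $\iota$ is \emph{equivalent} to commutativity of its evaluated (uncurried) counterpart, in which every occurrence of $\nu$ is replaced by its defining data $\eta^{\mathcal{X}}_{\mathcal{X}}(\id_{\mathcal{X}}\mysquare\eta^{\mathcal{X}}_{\mathcal{X}})$ and every occurrence of $\iota$ by $\lambda_{\mathcal{X}}$. Invoking MacLane coherence for $(\Coll,\mysquare,I)$ I would then drop all associators and unitors from the bracketing, exactly as in the graded-set proof.

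The three evaluated diagrams are then chased by the same arguments used for $taut(X)$. For associativity the two lower cells commute by the very definition of $\nu$ as the currying of the two successive evaluations packaged in $\mathcal{K}$, while the remaining upper cell commutes by the bifunctoriality of $\mysquare$. For the left unit law the upper cell again commutes by functoriality of $\mysquare$ and the lower triangle commutes because $\iota$ was defined as the currying of $\lambda_{\mathcal{X}}$; for the right unit law the roles are interchanged, one triangle commuting by the definition of $\iota$ and the residual square commuting trivially. None of these steps uses anything particular to the globular setting beyond the adjunction and the bifunctoriality of the tensor, so they transcribe word for word.

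The one place where genuine content hides — and the step I expect to be the real obstacle — is establishing the closedness invoked above, i.e.\ that the dependent-product functor $\Pi_{\mathcal{T}(!_{\mathcal{B}})}$ exists and that $-\mysquare\mathcal{B}\dashv[\mathcal{B},-]$ genuinely holds. In $\Grdset$ this was transparent, since $\Pi_{f}$ over $\Set$ has the explicit fibrewise-product description used in Section~\ref{sec-exp-grdset}. Over $\Glob$ the functor $\Pi_{\phi}$ has no such elementary formula, but its existence is guaranteed because $\Glob$ is a presheaf topos and hence locally cartesian closed, so every change-of-base functor $\phi^*$ between its slices has a right adjoint. Once this adjunction, together with the bifunctoriality of $\mysquare$ and MacLane coherence for $\Coll$, is in hand, the verification that $(Taut(\mathcal{X}),\nu,\iota)$ is a monoid is formal and identical to the classical case.
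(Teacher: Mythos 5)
Your proposal is correct and follows essentially the same route as the paper's own proof, which likewise reduces the monoid axioms for $(Taut(\mathcal{X}),\nu,\iota)$ to the graded-set argument by currying along the $-\mysquare\mathcal{X}\dashv[\mathcal{X},-]$ adjunction and invoking coherence for $(\Coll,\mysquare,I)$. Your explicit remark that the closedness of $\Coll$ rests on $\Glob$ being a presheaf topos, hence locally cartesian closed, is a welcome clarification of a point the paper only asserts in passing.
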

		
		\begin{proof}
			We need only to show that for $Taut(\mathcal{X}):[\mathcal{X},\mathcal{X}] \rightarrow \Tone$ the collection morphisms $\iota: \one \rightarrow [\mathcal{X},\mathcal{X}]$ and $\nu: [\mathcal{X},\mathcal{X}] \mysquare [\mathcal{X},\mathcal{X}] \rightarrow [\mathcal{X},\mathcal{X}]$ satisfy the commutative diagrams required of a monoid object in $\Coll$.  This can be seen, just as in the graded set case, by first currying the maps in the relevant diagrams and checking to see that these new curred diagrams, whose commutativity is equivalent with that of the originals, do in fact commute.  The details of which are the same as those from the graded set case above.
		\end{proof}

        \begin{definition}
            A collection $\mathcal{X}$ is called a $\emph{degenerate collection}$ if the arity map factors as $x = [id] \circ !_{\mathcal{X}}$, where $[id]:\one \rightarrow \Tone$ is the globular set map defined above.
        \end{definition}

        Note that any globular set can be canonically associated to a particular degenerate collection by sending each $n$-cell to the unique $n$-cell in $\Tone$ that's in the image of the map $[id]$. This is analogous to canonically associating to each set a degenerate graded set concentrated over zero.  And this once again allows us to concisely define algebras for a globular operad.
        
		\begin{definition}
			Let $o:\mathcal{O} \rightarrow \Tone$ be a globular operad.  A $\emph{left }\mathcal{O}\emph{-module}$ is a collection homomorphism $f:\mathcal{O} \rightarrow Taut(\mathcal{X})$ for some collection $x:\mathcal{X} \rightarrow \Tone$.  An $\mathcal{O}\emph{-algebra}$ in $\Glob$ is a left $\mathcal{O}$-module such that the collection $\mathcal{X}$ is degenerate.
		\end{definition}

		We again say that in such a case the operad $\mathcal{O}$ acts on the collection $\mathcal{X}$.  Moreover, when $x = [id] \circ !_{\mathcal{X}}: \mathcal{X} \rightarrow \Tone$, the operad is acting on what we can think of as a globular set in disguise because each globular cell in $\mathcal{X}$ sits above one of the special identity cells in $\Tone$ described above. Note that because of how $\mysquare$ is defined on collections, the arity of a composable pair is the arity shape of the original cell expanded to include the shapes of the cells which were plugged in to the original cell.  But when we compose with collection cells that sit over one of these special identities, the arity of the composable pair does not expand in this typical way.  Instead, such composable pairs have arities that also sit over one of these special identities of the appropriate dimension.  It is analogous to the graded set case in which the cells concentrated in degree zero, when composed as graded set elements via $\mysquare$, have a composite arity of a sum of zeros.  Hence, the arity of everything in the graded set concentrated in degree zero remains zero both before and after composition.  Similarly, in a degenerate collections the arity of any cell in $\mathcal{X}$ is simply the iterative identity of the appropriate dimension, both before and after applying any well-defined pasting operation.
		
		Note then that in \cite{leinster2004higher} Leinster defines, just as with classical operads, an algebra $\mathcal{X}$ for a globular operad $\mathcal{O}$ as an algebra for the canonical monad $\mathcal{T}_{\mathcal{O}}$ associated to the operad $\mathcal{O}$.  Let $\mathcal{O}(n)$ be the globular set of $n$-cells in $\mathcal{O}$, $\mathcal{O}(\pi)$ be the globular set of cells whose arity is of shape $\pi$, and $\hat{\pi}$ the cell $\pi$ thought of as a globular sub-object of $\mathcal{X}$.  Then Leinster's definition amounts to equipping the globular set $\mathcal{X}$ with, for each $n \in \mathbb{N}$ and every $\pi \in \mathcal{O}(\pi)$, functions $h_{\pi}: \mathcal{O}(\pi) \times \Glob(\hat{\pi},\mathcal{X}) \rightarrow \mathcal{X}(n)$.  For each cell $\theta \in \mathcal{O}$, Leinster then defines an induced map $\overline{\theta}(-) := h_{\pi}(\theta, -):\Glob \rightarrow \Glob$ which takes a globular set map (i.e. a labeling of the diagram shape $\pi$ by cells of the globular set $\mathcal{X}$) and returns an $n$-cell from $\mathcal{X}$.  At first this approach may seem quite different from the definition of algebras given above.  However, we have the following result.

		\begin{theorem}
			An algebra $\mathcal{X}$ for the canonical monad $\mathcal{T}_{\mathcal{O}}$ associated to the globular operad $\mathcal{O}$ are algebras in $\Glob$ for $\mathcal{O}$ as specified by an operad homomorphism to the tautological globular operad $Taut(\mathcal{X})$.
		\end{theorem}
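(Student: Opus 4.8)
The plan is to produce a bijection, natural in $\mathcal{X}$, between operad homomorphisms $f : \mathcal{O} \rightarrow Taut(\mathcal{X}) = [\mathcal{X},\mathcal{X}]$ and $\mathcal{T}_{\mathcal{O}}$-algebra structures on the underlying globular set of $\mathcal{X}$, and then to check that under this bijection the two operad-homomorphism axioms correspond exactly to the two monad-algebra axioms. The engine is the hom-tensor adjunction $- \mysquare \mathcal{X} \dashv [\mathcal{X},-]$ built in Section~\ref{sec-exp-coll}: transposing $f$ across this adjunction yields a collection homomorphism $\bar{f} := \eta^{\mathcal{X}}_{\mathcal{X}} \circ (f \mysquare \id_{\mathcal{X}}) : \mathcal{O} \mysquare \mathcal{X} \rightarrow \mathcal{X}$, and conversely every such ``action'' map arises from a unique $f$. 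So the theorem reduces to identifying these action maps with algebra structures for $\mathcal{T}_{\mathcal{O}}$.

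First I would pin down the canonical monad $\mathcal{T}_{\mathcal{O}}$ of \cite{leinster2004higher} in the present notation. Because the algebra's arity map factors as $x = [id] \circ !_{\mathcal{X}}$, the globular set $\mathcal{X}$ is being regarded as a collection concentrated over the tower of iterated identities, and since the formation of $\mathcal{O} \mysquare \mathcal{X}$ uses only $\mathcal{T}(!_{\mathcal{X}})$ and not the arity map of $\mathcal{X}$, the underlying globular set of $\mathcal{O} \mysquare \mathcal{X}$ is exactly the pullback $\mathcal{O} \times_{\Tone} \mathcal{T}(\mathcal{X})$, which is the value $\mathcal{T}_{\mathcal{O}}(\mathcal{X})$. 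Thus $\mathcal{T}_{\mathcal{O}} = \mathcal{O} \mysquare -$ as an endofunctor of $\Glob$, its unit $\eta^{\mathcal{T}_{\mathcal{O}}}$ is induced by the operad unit $e : \one \rightarrow \mathcal{O}$ together with the unit of $\mathcal{T}$, and its multiplication $\mu^{\mathcal{T}_{\mathcal{O}}}$ is induced by the operad multiplication $m : \mathcal{O} \mysquare \mathcal{O} \rightarrow \mathcal{O}$ together with $\mu_{\one}$. The condition that $\mathcal{X}$ be concentrated over $[id]$ is precisely what guarantees that $\bar{f}$ respects arities, so that it is a genuine collection homomorphism; establishing these identifications as pullback diagrams over $\Tone$ is the technical backbone of the argument.

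With the monad in hand I would verify the two halves of the correspondence by transposing. For the unit, the equation $f \circ e = \iota$ saying $f$ preserves the operad identity transposes to $\bar{f} \circ (e \mysquare \id_{\mathcal{X}}) = \lambda_{\mathcal{X}}$, and after recognizing $(e \mysquare \id_{\mathcal{X}})$ followed by the left unitor as the monad unit $\eta^{\mathcal{T}_{\mathcal{O}}}_{\mathcal{X}}$ this is exactly the algebra unit law $\bar{f} \circ \eta^{\mathcal{T}_{\mathcal{O}}}_{\mathcal{X}} = \id_{\mathcal{X}}$. For associativity, the equation $f \circ m = \nu \circ (f \mysquare f)$ saying $f$ preserves composition transposes to $\bar{f} \circ (m \mysquare \id_{\mathcal{X}}) = \bar{f} \circ (\id_{\mathcal{O}} \mysquare \bar{f}) \circ \alpha$, where the right-hand transpose is computed from the defining formula for $\nu$ as the currying of $\mathcal{K} = \eta^{\mathcal{X}}_{\mathcal{X}}(\id_{\mathcal{X}} \mysquare \eta^{\mathcal{X}}_{\mathcal{X}})$ exactly as in the graded set case. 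Identifying the left-hand side with $\bar{f} \circ \mu^{\mathcal{T}_{\mathcal{O}}}_{\mathcal{X}}$ and the right-hand side with $\bar{f} \circ \mathcal{T}_{\mathcal{O}}(\bar{f})$ (using $\mathcal{T}_{\mathcal{O}}(\bar{f}) = \id_{\mathcal{O}} \mysquare \bar{f}$) then yields precisely the algebra associativity law.

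Finally I would reconcile this action map with Leinster's pointwise data. Restricting $\bar{f} : \mathcal{O} \mysquare \mathcal{X} \rightarrow \mathcal{X}$ to the fiber over a pasting shape $\pi \in \Tone$, and using that a cell of $\mathcal{T}(\mathcal{X})$ of shape $\pi$ is the same thing as a labeling $\hat{\pi} \rightarrow \mathcal{X}$ because $\mathcal{X}$ is concentrated over the identities, recovers Leinster's structure maps $h_{\pi} : \mathcal{O}(\pi) \times \Glob(\hat{\pi},\mathcal{X}) \rightarrow \mathcal{X}(n)$; conversely, assembling the $h_{\pi}$ over all shapes reconstitutes $\bar{f}$, and naturality upgrades the whole correspondence to an isomorphism of categories of algebras. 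The main obstacle I anticipate is the associativity step: one must check that the monad multiplication $\mu^{\mathcal{T}_{\mathcal{O}}}$, which splices together the operad multiplication $m$ with the pasting-composition map $\mu_{\one}$ on $\Tone$, really is the transpose of $\nu$. This means keeping straight the two distinct routes by which nested operations act---inner substitution via $\mathcal{T}_{\mathcal{O}}(\bar{f})$ versus outer composition via $m$---which is the globular analogue of the associativity square verified for the classical tautological operad, now complicated by the fact that the letter slots of a globular word are cells of varying dimension glued along their boundaries rather than a flat horizontal string.
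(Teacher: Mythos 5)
Your proposal follows essentially the same route as the paper: transpose $f$ across the hom-tensor adjunction to an action map $\mathcal{O} \mysquare \mathcal{X} \rightarrow \mathcal{X}$ and identify its restrictions over each pasting shape $\pi$ with Leinster's structure maps $h_{\pi}$. Your version is in fact more complete, since you also explicitly identify $\mathcal{T}_{\mathcal{O}}$ with $\mathcal{O} \mysquare -$ and check that the operad-homomorphism axioms transpose to the monad-algebra unit and associativity laws, steps the paper's proof leaves implicit.
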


		\begin{proof}
			Let the globular set $\mathcal{X}$ be an algebra for the canonical monad $\mathcal{T}_{\mathcal{O}}$ associated to the globular operad $\mathcal{O}$ as described by Leinster.  Realize that each of these maps are can be thought of as the components, indexed by $\Tone$, of the collection homomorphism $F:\mathcal{O} \mysquare \mathcal{X} \rightarrow \mathcal{X}$ when $\mathcal{X}$ is thought of a degenerate collection.  Each of these collection maps can then be curried via the hom-tensor adjunction in $\Coll$ with respect to the $\mysquare$ tensor product to get collection maps $f: \mathcal{O} \rightarrow [\mathcal{X},\mathcal{X}] = Taut(\mathcal{X})$.  This homomorphism can then be considered as a family of maps $F_{\pi}:\mathcal{O}(\pi) \times \Glob(\hat{\pi},\mathcal{X}) \rightarrow \mathcal{X}(n)$ for each $\pi \in \mathcal{O}(n)$ with $n \in \mathcal{O}(n)$.  This is because $\mathcal{O}(\pi)$ is the restriction of $\mathcal{O}$ to the fiber over $\pi \in \Tone$, $\Glob(\hat{\pi},\mathcal{X})$ is the set of colorings of the cell $\pi \in \Tone$ by cells of $\mathcal{X}$, and for each $\theta \in \mathcal{O}(\pi)$, the induced map $\overline{\theta}(-) = h_{\pi}(\theta, -):\Glob \rightarrow \Glob$ obtained by fixing the first entry of $F_{\pi}$ takes a coloring of $\pi$, thought of as a globular set, by cells of $\mathcal{X}$ and gives an $n$-cell in $\mathcal{X}$.  But this is exactly what $F$ does if we restrict it's first input.  Given a cell $\theta$ of arity $\pi \in \mathcal{O}$, $F$ induces a map $\underline{\theta} = F_{\pi}(\theta, -)$ which takes a cell $\gamma$ of $\mathcal{X}$, which thought of as a degenerate collection in the second factor of a $\mysquare$ product means that it is a globular word in $\mathcal{X}$ that can be composed with $\theta$ (or equivalently, a coloring of $\theta$ by elements of $\mathcal{X}$), and gives another element of $\mathcal{X}$ which is the composition of $\theta$ and $\gamma$ in the algebra specified by $f$.   Hence it follows that algebras as we have described them are exactly the same algebras specified by Leinster's construction, merely presented in a different way. 
		\end{proof}
	
		We shall conclude this exposition with the following observation, which follows immediately from thinking of algebras for globular operads in the representation form described above.
		
		\begin{theorem}
			An algebra for a globular operad $\mathcal{O}$ is an algebra for every globular operad $\mathcal{P}$ which maps to $\mathcal{O}$.  In particular, an algebra for $\mathcal{O}$ is an algebra for every globular sub-operad.
		\end{theorem}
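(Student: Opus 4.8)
The plan is to mirror exactly the proof of the corresponding statement for classical operads, exploiting the fact that algebras are now defined as operad homomorphisms into a tautological object rather than monadically. By definition, an algebra for $\mathcal{O}$ consists of a collection $x:\mathcal{X} \rightarrow \Tone$ together with an operad homomorphism $f:\mathcal{O} \rightarrow Taut(\mathcal{X})$, where $Taut(\mathcal{X}) = [\mathcal{X},\mathcal{X}]$ carries the canonical globular operad structure established in the previous theorem. Since globular operads are monoids in $(\Coll, \mysquare)$ and operad homomorphisms are precisely the monoid homomorphisms between them, these assemble into a category; in particular, operad homomorphisms compose.

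First I would take an arbitrary globular operad homomorphism $g:\mathcal{P} \rightarrow \mathcal{O}$ and form the composite $f \circ g: \mathcal{P} \rightarrow Taut(\mathcal{X})$. Because the composite of two monoid homomorphisms in $\Coll$ is again a monoid homomorphism, $f \circ g$ is an operad homomorphism, and hence by definition it exhibits $\mathcal{X}$ as an algebra for $\mathcal{P}$ with the same underlying collection $x$. This already yields the first assertion: any algebra for $\mathcal{O}$, encoded by $f$, restricts along $g$ to an algebra for $\mathcal{P}$.

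For the ``in particular'' clause I would specialize to the case where $\mathcal{P} = \tilde{\mathcal{O}}$ is a globular sub-operad of $\mathcal{O}$ and $g$ is its inclusion. The only point that needs checking is that a sub-operad comes equipped with a canonical inclusion operad homomorphism $\tilde{\mathcal{O}} \hookrightarrow \mathcal{O}$: a globular sub-operad is obtained by selecting a sub-collection of $\mathcal{O}$ that is closed under the multiplication $m$ and contains the image of the unit $e$, so the inclusion is automatically a monoid homomorphism in $\Coll$. Composing $f$ with this inclusion then restricts the $\mathcal{O}$-action to a $\tilde{\mathcal{O}}$-action on $\mathcal{X}$.

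The main obstacle here is not computational but definitional: the entire content of the theorem collapses to the functoriality of composition precisely because algebras are defined via maps into $Taut(\mathcal{X})$. The only genuine verifications are that operad homomorphisms compose and that sub-operad inclusions are themselves operad homomorphisms, both immediate from the fact that monoids in a monoidal category form a category. This is exactly the structural advantage over the monadic definition alluded to earlier, where transporting an algebra along an operad map would instead require an explicit comparison of the associated monads.
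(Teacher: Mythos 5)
Your proposal is correct and follows essentially the same route as the paper: precompose the algebra-defining homomorphism $f:\mathcal{O} \rightarrow Taut(\mathcal{X})$ with the given map $\mathcal{P} \rightarrow \mathcal{O}$, then specialize to sub-operad inclusions. If anything, you are slightly more careful than the paper, which loosely calls the maps ``morphisms of globular sets'' and misstates the conclusion as a structure on $\mathcal{P}$ rather than on $\mathcal{X}$; your explicit appeal to the composability of monoid homomorphisms in $(\Coll,\mysquare)$ tightens exactly those points.
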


		\begin{proof}
			Recall that any globular operad $\mathcal{O}$ is a monoid in $\Coll$ and an algebra for such an operad is specified by a morphism of globular sets $\phi: \mathcal{O} \rightarrow Taut(\mathcal{X})$.  Let $\mathcal{P}$ be another globular operad and $\psi: \mathcal{P} \rightarrow \mathcal{O}$ be a morphism of globular sets.  Then the map $\phi (\psi): \mathcal{P} \rightarrow Taut(\mathcal{X})$ induces on $\mathcal{X}$ the structure of an $\mathcal{P}$-algebra.  Moreover, as any globular sub-operad $\tilde{\mathcal{O}}$ of $\mathcal{O}$ comes from removing a certain subset of cells from $\mathcal{O}$, the algebra for $\mathcal{O}$ specified by the map $\phi$ is also an algebra for $\tilde{O}$ simply by restricting $\phi$ to $\tilde{\mathcal{O}}$.  Moreover, any further subsets of $\tilde{\mathcal{O}}$ corresponds to a further restriction of $\phi$, showing that any algebra for $\mathcal{O}$ is an algebra for every globular sub-operad $\tilde{\mathcal{O}}$. 
		\end{proof}

\nocite{*}
\bibliographystyle{abbrv}
\bibliography{OnTautGlobOp}

\end{document}